\newcommand{\yong}[1]{{\color{black} #1}}
\def\pgf@circ@spst@path#1{\pgf@circ@bipole@path{spst}{#1}}
\tikzset{switch/.style = {\circuitikzbasekey, /tikz/to path=\pgf@circ@spst@path, l=#1}}
\tikzset{spst/.style = {switch = #1}}
\newtheorem{problem}{Problem}
\begin{document}

\markboth{S.Z. Yong et al.}{Switching and Data Injection Attacks on Stochastic Cyber-Physical Systems}

\title{Switching and Data Injection Attacks on Stochastic Cyber-Physical Systems: Modeling, Resilient Estimation and Attack Mitigation}
\author{Sze Zheng Yong
\affil{Arizona State University}
Minghui Zhu
\affil{Pennsylvania State University}
Emilio Frazzoli
\affil{Swiss Federal Institute of Technology (ETH)}
}

\begin{abstract}
{\sectionfont \textbf{ABSTRACT}}\\[1em]
In this paper, we consider the problem of attack-resilient state estimation, that is to reliably estimate the true system states despite two classes of attacks:
(i) attacks on the switching mechanisms and (ii) false data injection attacks on actuator and sensor signals, in the presence of unbounded stochastic process and measurement noise signals. We model the systems under attack as hidden mode stochastic switched linear systems with unknown inputs and propose the use of a multiple-model inference algorithm to tackle these security issues. 
Moreover, we characterize fundamental limitations to resilient estimation (e.g., upper bound on the number of tolerable signal attacks) and discuss the topics of attack detection, identification and mitigation under this framework. 
Simulation examples of switching and false data injection attacks on a benchmark system and \yong{an IEEE 68-bus test system} show the efficacy of our approach to recover resilient (i.e., asymptotically unbiased) state estimates \yong{as well as to identify and mitigate the attacks}. 
\end{abstract}

\begin{bottomstuff}
This work was supported by the National Science Foundation, grant \#1239182. M. Zhu is partially supported by ARO W911NF-13-1-0421 (MURI) and NSF CNS-1505664.

Author's addresses: S.Z. Yong, School for Engineering of Matter, Transport and Energy, Arizona State University, Tempe, AZ (e-mail: szyong@asu.edu); M. Zhu,
School of Electrical Engineering and Computer Science, Pennsylvania State University, University Park, PA (e-mail: muz16@psu.edu); E. Frazzoli, Institute for Dynamic Systems and Control, Swiss Federal Institute of Technology (ETH), Z\"urich, Switzerland (email: efrazzoli@ethz.ch).
\end{bottomstuff}

\maketitle

\section{Introduction}

Cyber-physical systems (CPS) are systems in which computational and communication elements collaborate to control physical entities, and for networked CPS, the Internet of Things (IoT) interlinks these physical and cyber worlds 
in a continuous and close interaction. The cyber-physical coupling introduces new functions to control systems and improves their performance. However, control systems are also exposed to new cyber vulnerabilities. 
Such systems, which include the power grid, autonomous vehicles, medical devices, etc, are usually \emph{safety-critical} and if compromised or malfunctioning, can cause serious harm to the controlled physical entities and the people operating or utilizing them. Recent incidents of attacks on CPS, e.g., the Maroochy water breach, the StuxNet computer worm and various industrial security incidents \cite{Cardenas.2008b,Farwell.2011}, highlight a need for CPS and IoT security and for new designs of resilient estimation and control. 

\emph{\textbf{Literature review.}} 
Much of the early research focus has been on the characterization of undetectable attacks and on attack detection and identification techniques, which range from a simple application of data time-stamps in \cite{Zhu.2013} to \yong{anomaly detection methods using residuals (e.g., \cite{Mo.2010,Weimer.2012,kwon2013security}) with empirically chosen thresholds to trade-off between false alarms and probability of anomaly/attack detection. On the other hand, attack mitigation is typically considered from two perspectives---preventive and reactive \cite{combita2015response}---where preventive mitigation
identifies and removes system vulnerabilities to
prevent exploitation (e.g., \cite{dan2010stealth}) while reactive attack mitigation initiates countermeasures after detecting an attack and is
mainly studied using game-theoretic methods (e.g., \cite{ma2013markov,zhu2015game}).}

However, the ability to reliably estimate the true system states despite attacks (i.e., resilient state estimates) is just as desirable, if not more than purely attack detection or attack mitigation; thus, this problem has garnered considerable interest in recent years because the availability of resilient state estimates would, among others, allow for continued operation with the same controllers as in the case without attacks or for locational marginal pricing of electricity based on the real unbiased state information despite attacks. \yong{This problem has been studied both in the context of static systems (e.g., \cite{liu2011false,kosut2011malicious}) as well as dynamic systems as in this paper.}

For deterministic linear \yong{dynamic} systems under actuator and sensor signal attacks (e.g., via false data injection \cite{Cardenas.2008,Mo.2010,Pasqualetti.2013}), the resilient state estimation problem has been mapped onto an $\ell_0$ optimization problem that is NP-hard \cite{Pasqualetti.2013,Fawzi.2014}; thus, a relaxation to a convex problem is considered in \cite{Fawzi.2014}. Further extensions \cite{Pajic.2014,Pajic.2015} compute the worst-case bound on the state estimate error in the presence of additive noise errors with known bounds, while \cite{Yong.Foo.ea.ACC16} considers the resilient state estimation problem that is robust to bounded multiplicative and additive modeling and noise errors. However, these approaches do not apply in the presence of additive stochastic (unbounded) 
noise signals, which is one of the security issues we consider in this paper. On the other hand, \cite{Mishra.2015} 
consider systems with stochastic noise signals but with only sensor attacks. 

In addition, attacks that exploit the switching vulnerability of CPS and IoT or that alter its network topology have been recently identified as a serious CPS security concern. Some instances of such vulnerability are attacks on the circuit breakers of a smart grid \cite{Liu.2013} or on the logic mode (e.g., failsafe mode) of a traffic infrastructure \cite{Ghena.2014}, on the meter/sensor data network topology \cite{Kim.2013} and on the power system network topology \cite{Weimer.2012}. However, to the best of our knowledge, no resilient state estimators for dynamic systems have been developed to deal with this new class of attacks. 

Our techniques are based on simultaneous input and state estimation (see, e.g., \cite{Gillijns.2007,Gillijns.2007b,Yong.Zhu.ea.Automatica16}), where data injection attack vectors can be modeled as unknown inputs of dynamical systems. 
Of particular importance to our approach are the stability and optimality properties as well as their relationship to strong detectability \cite{Yong.Zhu.ea.Automatica16}. Inspired by the multiple-model approach (see, e.g.,  \cite{Bar-Shalom.2002,Mazor.1998} and references therein), our previous work \cite{Yong.Zhu.ea.SICON16} introduced an inference algorithm that estimates hidden modes, unknown inputs and states simultaneously, which we now propose as the key tool to achieve resilient estimation. 

\emph{\textbf{Contributions.}} 
In this paper, we introduce a resilient state estimation algorithm that outputs reliable estimates of the true system states despite two classes of attacks. To our best knowledge, our resilient estimation algorithm is the first that addresses switching attacks as well as the first that successfully deals with simultaneous actuator and sensor attacks in the presence of unbounded stochastic noise signals. 

\yong{Our approach is built upon a general purpose inference algorithm developed in our previous work \cite{Yong.Zhu.ea.SICON16} for hidden mode stochastic switched linear systems with unknown inputs. 
The first novelty of the present paper lies in the modeling of switching and false data injection attacks on cyber-physical systems in the presence of unbounded stochastic noise signals as an instance  of this system class. In doing so, we show that unbiased state estimates (i.e., resilient state estimates) can be asymptotically recovered with the algorithm in \cite{Yong.Zhu.ea.SICON16}. Secondly,} we characterize fundamental limitations to resilient estimation \yong{that is useful for preventative mitigation}, such as the upper bound on the number of correctable/tolerable attacks, 
and consider the subject of attack detection. In addition, we provide sufficient conditions for designing unidentifiable attacks (from the attacker's perspective) and also sufficient conditions to obtain resilient state estimates even when the attacks are not identified (from the system operator/defender's perspective). Finally, we design an attack-mitigating and stabilizing feedback controller \yong{that contributes to the literature on non-game-theoretic reactive attack mitigation}. 

A preliminary version of this paper was presented in \cite{Yong.Zhu.ea.CDC15_resilient}, and this paper expands on those results  and includes new sections on attack detection and identification, as well as attack mitigation.  

\emph{\textbf{Paper Organization.}} 
Section 2 provides a motivating example of switching and data injection attacks on a multi-area power system. In Section 3, we describe the modeling of switching and false data injection attacks on cyber-physical systems and state our assumptions/models of the system and attacker. \yong{Section 4.1 reviews the multiple-model algorithm and its nice properties from \cite{Yong.Zhu.ea.SICON16} and provides an interpretation of the general purpose algorithm in the context of resilient state estimation. The rest of Section 4 is dedicated to the novel study of fundamental limitations to attack resilience.} Next, we focus on attack detection and identification in Section 5, and provide some sufficient conditions as guidelines for system operators/designers, while we design an attack-mitigating feedback controller in Section 6. Section 7 then demonstrates the effectiveness of our proposed approach on a benchmark system and \yong{an IEEE 68-bus test system}. 
Finally, we conclude with some remarks in Section 8.

\section{Motivating Example} \label{sec:motivation}

To motivate the problem of resilient state estimation of stochastic cyber-physical systems under switching and false data injection attacks, let us consider an example of a power system 
with 3 control areas, each consisting of generators and loads, with transmission/tie-lines providing interconnections between areas  (see Figure \ref{fig:powerSys}). 

%
%

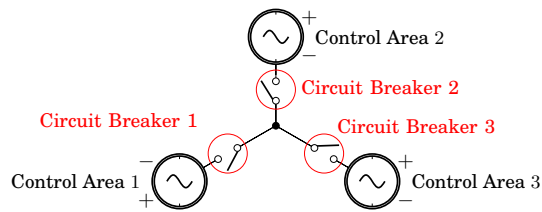
\begin{figure}[!b]
\centering
\resizebox{0.525\textwidth}{!}{%
\begin{circuitikz}[ scale =0.925,american voltages]
 \draw 
(0,0) to[switch,-o] (0,1.3)
 to[short,-o](0,0.8)
(0,0) to[short,-o](0,0.45)
 (0,0) to[switch,*-] (-1.7321,-1) 
  to[short,-o](-1.0393,-0.6)
  (0,0) to[short,-o](-0.6928,-0.4)
 (0,0)  to[switch,-o] (1.7321,-1)
 to[short,-o](1.0393,-0.6)
  (0,0) to[short,-o](0.6928,-0.4);
 \draw[red] (0.866,-0.5) circle(10pt);
  \draw[red] (-0.866,-0.5) circle(10pt);
   \draw[red] (0,0.65) circle(10pt);
  \draw[thick,fill=white] (-1.7321,-1) circle(14pt);
 \draw (-1.7321,-1.4) to [sV=\small{Control Area $1$ \,}] (-1.7321,-0.6);
  \draw[thick,fill=white] (0,1.6) circle(14pt);
 \draw (0,2.0) to [sV=\small \, Control Area $2$] (0,1.2);
  \draw[thick,fill=white] (1.7321,-1) circle(14pt);
 \draw (1.7321,-0.6) to [sV=\small \, Control Area $3$] (1.7321,-1.4);
 \node[text width=3.5cm,red] at (-2.35,0.1) 
    {\small Circuit Breaker 1};
     \node[text width=3.5cm,red] at (3.0,0.0) 
    {\small Circuit Breaker 3};
      \node[text width=3.5cm,red] at (2.35,0.7) 
    {\small Circuit Breaker 2};
\end{circuitikz}
}
\caption{A 3-area power system with radial topology (corresponding to \emph{node/bus attack)}.\label{fig:powerSys}}
\end{figure}

A malicious agent is assumed to have access to circuit breakers that control the tie-lines (similar to \cite{Liu.2013}), and is thus able to sever the connection between control areas. Depending on the topology of the tie-line interconnection graph, such attacks may correspond to a \emph{node/vertex/bus attack} (disconnection of a control area from all others) or a \emph{link/edge/line attack} (disabling of a specific tie-line between two control areas), i.e., the power flow across the tie lines is altered. 
Moreover, we assume that the system dynamics and state measurements are subject to random noise and attacks via additive false data injection in the actuator and sensor signals. 

The goal of resilient state estimation is thus to obtain unbiased state estimates despite switching attacks, i.e., attacks on switches/circuit breakers, and data injection attacks on actuators and sensors.

\section{Problem Formulation} \label{sec:Problem}

\subsection{Attack Modeling} \label{sec:attackModel}
We consider two different classes of possibly time-varying 
attacks on cyber-physical systems (CPS):
\begin{description}
\item[\textbf{Data Injection Attacks}] Attacks on actuator and sensor signals via manipulation or injection with ``false" signals of unknown \emph{magnitude} and \emph{location} (i.e., subset of attacked actuators or sensors). In other words, signal attacks consist of both \emph{signal magnitude attacks} and \emph{signal location attacks}.   \emph{Examples:} 
Denial of service, deceptive attacks via data injection \cite{Cardenas.2008,Pasqualetti.2013}.
\item[\textbf{Switching Attacks}] Attacks on the switching mechanism that changes the system's \emph{mode} of operation, or on the sensor data or interconnection network  \emph{topology}, which we will also refer to as \emph{mode attacks}. 
\emph{Examples:} Attack on circuit breakers \cite{Liu.2013}, power network topology \cite{Weimer.2012}, sensor data network \cite{Kim.2013} and logic switch of a traffic infrastructure \cite{Ghena.2014}. 

\end{description}

\subsubsection{Data Injection Attacks}
For clarity of exposition, we assume for the moment that there is only one mode of operation, and that the linear system dynamics is not perturbed by stochastic noise signals:
\begin{align*}
\begin{array}{l}
x_{k+1}=A_k x_k + B_k (u_k+d^a_k), \quad 
y_k = C_k x_k +D_k (u_k+d^a_k) + d^s_k,
\end{array}
\end{align*}
where $x_k \in \mathbb{R}^n$ is the continuous state, $y_k \in \mathbb{R}^\ell$ is the sensor output, $u_k \in \mathbb{R}^m$ is the known input, $d^a_k \in \mathbb{R}^m$ and $d^s_k \in \mathbb{R}^\ell$ are attack signals that are injected into the actuators and sensors, respectively. The attack signals are sparse, i.e., if sensor $i \in \{1,\cdots,\ell\}$ is not attacked then necessarily $d_{k}^{s,(i)}=0$ for all time steps $k$; otherwise $d_k^{s,(i)}$ can take any value. Since we do not know which sensor is attacked, we refer to this uncertainty as the \emph{signal location attack}, and the arbitrary values that $d_k^{s,(i)}$ can take as the \emph{signal magnitude attack}. The same observation holds for the attacks on actuators $d^a_k$.

If, in addition, we have knowledge of which the actuators and sensors are vulnerable to data injection attacks, we will incorporate this information using $\overline{G}_k$ and $\overline{H}_k$ to result in following system dynamics
\begin{align*}
\begin{array}{l}
x_{k+1}=A_k x_k + B_k u_k+ \overline{G}_k d^a_k, \quad 
y_k = C_k x_k +D_k u_k + \overline{D}_k d^a_k+ \overline{H}_k d^s_k.
\end{array}
\end{align*}
If no such knowledge is available, $\overline{G}_k=B_k$, $\overline{D}_k=D_k$ and $\overline{H}_k=I$. Moreover, in some cases, the actuator and sensor attack signals are known to be mixed and cannot be separated. In order to take this into consideration, we represent the potentially `mixed' attack signals with $d_k$ and introduce corresponding $G_k$ and $H_k$ matrices to obtain
\begin{align*}
\begin{array}{l}
x_{k+1}=A_k x_k + B_k u_k+ {G}_k d_k,\quad
y_k = C_k x_k +D_k u_k +{H}_k d_k.
\end{array}
\end{align*}
In the absence of mixed attack signals, $d_k= \begin{bmatrix} d^a_k & d^s_k \end{bmatrix}$, $G_k =\begin{bmatrix} \overline{G}_k & 0 \end{bmatrix}$ and $H_k =\begin{bmatrix} \overline{D}_k & \overline{H}_k \end{bmatrix}$. The description of these matrices will be made more precise in Section \ref{sec:sysDes}.

\subsubsection{Switching Attacks}
On the other hand, a system may have multiple modes of operation, denoted by the set $\mathcal{Q}^m$ of cardinality $t_m \triangleq |\mathcal{Q}^m|$, either through the presence of switching mechanisms or different configurations/topologies of the sensor data or interconnection network, i.e., each mode $q_k \in \mathcal{Q}^m$ has its corresponding set of system matrices, $\{A^{q_k}_k,B^{q_k}_k, C^{q_k}_k, D^{q_k}_k,G^{q_k}_k,  H^{q_k}_k\}$. A \emph{switching attack} or \emph{mode attack} then refers to the ability of an attacker to choose and change the mode of operation $q_k$ without the knowledge of the system operator/defender.

\paragraph{\textbf{Attacker Model Assumptions}}
We do not constrain the malicious \emph{signal magnitude attack} to be a signal of any type (random or strategic) nor to follow any model, thus no prior `useful' knowledge of the dynamics of $d_k$ is available (uncorrelated with $\{d_\ell\}$ for all $k\neq \ell$, $\{w_\ell\}$ and $\{v_\ell\}$ for all $\ell$). 

\subsection{System Description} \label{sec:sysDes}

In this section, we take the perspective of a system operator/defender, i.e., as one with the goal of obtaining resilient/reliable state estimates. Thus, our techniques include the modeling of the system in a way that facilitates the design of a resilient state estimation algorithm. 
Since we now assume that the system is perturbed by random, unbounded process and measurement noise signals, we model the switching and false data injection attacks on a noisy dynamic system using a \emph{hidden mode switched linear discrete-time stochastic system with unknown inputs} \yong{(i.e., a dynamical system with multiple modes of operation where the system dynamics in each mode is linear and stochastic, and the mode and some inputs are not known/measured}; cf. Figure \ref{fig:hyb_diag}): 
\begin{align} 
\nonumber (x_{k+1},{q}_k)\hspace{-0.05cm}&=( A_k^{q_k} x_k\hspace{-0.1cm}+\hspace{-0.1cm}B_k^{q_k} u^{q_k}_k\hspace{-0.1cm}+\hspace{-0.1cm}G_{k}^{q_k} d^{q_k}_{k} \hspace{-0.1cm}+\hspace{-0.1cm}w^{q_k}_k,q_k), \qquad
x_k\in \mathcal{C}_{q_k},\\
(x_k,q_k)^+ \hspace{-0.05cm}&=(x_k,\delta^{q_k}(x_k)),  \quad \qquad \quad \qquad \quad \qquad \qquad x_k \in \mathcal{D}_{q_k}, \label{eq:system}\\
\nonumber y_k&=C^{q_k}_{k} x_k\hspace{-0.1cm} +\hspace{-0.1cm} D^{q_k}_{k} u^{q_k}_k \hspace{-0.1cm}+\hspace{-0.1cm}H^{q_k}_{k} d^{q_k}_k \hspace{-0.1cm}+\hspace{-0.1cm} v^{q_k}_{k}, 
\end{align}
where $x_k \in \mathbb{R}^n$ is the continuous system state and $q_k \in \mathcal{Q} = \{1,2,\hdots,\mathfrak{N}\}$ is the hidden discrete state or \emph{mode}, which a malicious attacker has access to, \yong{while $\mathcal{C}_{q_k}$ and $\mathcal{D}_{q_k}$ are flow and jump sets, and $\delta^{q_k}(x_k)$ is the mode transition function. For more details on the hybrid systems formalism, see \cite{goebel.sanfelice.ea.09}}. For each mode $q_k$, $u^{q_k}_k \in U_{q_k} \subset \mathbb{R}^m$ is the known input, $d^{q_k}_k \in \mathbb{R}^p$ the unknown input or \emph{attack signal} and $y_k \in \mathbb{R}^l$ the output,
whereas the corresponding process noise $w_k^{q_k} \in \mathbb{R}^n$ and measurement noise $v^{q_k}_k \in \mathbb{R}^l$ are mutually uncorrelated, zero-mean Gaussian white random signals with known covariance matrices, $Q^{q_k}_k=\mathbb{E} [w_k^{q_k} w_k^{q_k \top}] \succeq 0$ and $R^{q_k}_k=\mathbb{E} [v^{q_k}_k v_k^{q_k \top}] \succ 0$, respectively.  Moreover, $x_0$ is  independent of $v^{q_k}_k$ and $w^{q_k}_k$ for all $k$. 

\begin{figure}[!t]
\begin{center}
\includegraphics[scale=0.375]{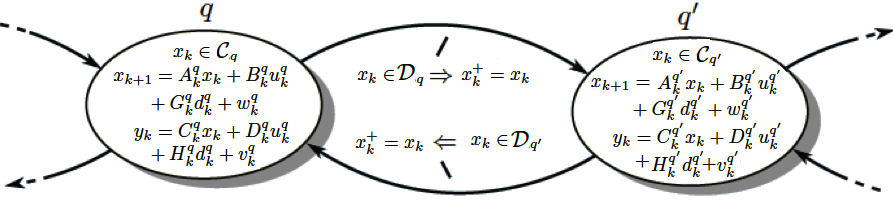}
\caption{Illustration of a switched linear system with unknown inputs as a hybrid automaton with two modes, $q$ and $q'$, \yong{where the system dynamics in each mode is linear}.\label{fig:hyb_diag} }
\end{center}
\end{figure}

Our stochastic cyber-physical system (CPS) model in \eqref{eq:system} is capable of  capturing the unknowns or uncertainties introduced by the switching and data injection attacks to the system of interest that are both \emph{categorical} and \emph{continuous}. The hidden mode allows us to model the categorical nature of the switching and data injection attacks (\emph{mode attack} and \emph{signal location attack}), whereas the unknown input captures the continuous nature of the \emph{signal magnitude attacks}.

At any particular time $k$, the stochastic CPS is in precisely one of its modes, which is not measured, hence \emph{hidden}. The following remark motivates the consideration of more modes than those corresponding to switching/mode attacks given by $\mathcal{Q}^m$.

\begin{remark}
Suppose again for simplicity that there is only one mode of operation, i.e., $\mathcal{Q}^m$ is a singleton. Then, in the ideal scenario for the system operator/defender that the system $(A_k,G_k,C_k,H_k)$ is strongly detectable, unbiased estimates of states $x_k$ can be obtained and the attack signal $d_k$ can also be identified \cite{Yong.Zhu.ea.Automatica16}. Unfortunately, this property does not hold in general. In fact, Theorem \ref{thm:max_errors} will reveal that we need a small number of vulnerable actuators and sensors to enable resilient state estimation. Thus, we will exploit the sparse nature of the false data injection attacks, and consider more models/modes in a set $\mathcal{Q}^d$, each with fewer vulnerable actuators and sensors to make sure that strong detectability holds. 
\end{remark}

Thus, the modes we consider in the model set $\mathcal{Q} \triangleq \mathcal{Q}^m \times \mathcal{Q}^d$ (as described below), whose cardinality will be characterized in Theorem \ref{thm:maxModels}, include 
\begin{enumerate}[(i)]
\item the modes of operation, $\mathcal{Q}^m$, that attacked switching mechanisms (e.g., circuit breakers, relays) operate via access to the jump set $\mathcal{D}_{q_k}$ and the mode transition function $\delta^{q_k}(\cdot)$, or the possible interconnection network topologies that dictate the system matrices, $A^{q_k}_k$ and $B^{q_k}_k$, and the sensor data network topologies, $C^{q_k}_k$ and $D^{q_k}_k$, that an attacker can choose ({\emph{mode attack}}), as well as 
\item the different hypotheses for each mode, $\mathcal{Q}^d$, about which actuators and sensors are attacked or not attacked, represented by $G^{q_k}_k$ and $H^{q_k}_k$ ({\emph{signal location attack}}). 
\end{enumerate}

More precisely, we assume that $G^{q_k}_k\triangleq \mathcal{G}_k \mathcal{I}_G^{q_k}$ and $H^{q_k}_k\triangleq \mathcal{H}_k \mathcal{I}_H^{q_k}$  for some input matrices $\mathcal{G}_k \in \mathbb{R}^{n \times t_a}$ and $\mathcal{H}_k \in \mathbb{R}^{\ell \times t_s} $, where $t_a$ and  $t_s$ are the number of actuator and sensor signals that are \emph{vulnerable}, respectively. Note that $p_a^{q_k} \leq t_a \leq m$ and $p_s^{q_k} \leq t_s \leq \ell$, i.e., the number of \emph{attacked} actuator signals $p_a^{q_k}$ under mode/hypothesis $q_k$ cannot exceed the number of \emph{vulnerable} actuators and in turn cannot exceed the total number of actuators $m_a$. The same holds for $p_s^{q_k}$ \emph{attacked} sensors from $t_s$ \emph{vulnerable} sensors out of $\ell$ measurements. 
Moreover, we assume that the maximum total number of attacks is $p \triangleq p_a^{q_k} +p_s^{q_k}  \leq p^*$, where $p^*$ is the maximum number of asymptotically correctable signal attacks (cf. Theorem \ref{thm:max_errors} for its characterization).

On the other hand, $\mathcal{I}_G^{q_k} \in \mathbb{R}^{t_a \times p}$ and $\mathcal{I}_H^{q_k}  \in \mathbb{R}^{t_s \times p}$ are index matrices such that $d_k^{a,q_k}\triangleq \mathcal{I}_G^{q_k} d_k$ and $d_k^{s,q_k}\triangleq \mathcal{I}_H^{q_k} d_k$ represent the subvectors of $d_k \in \mathbb{R}^p$  representing \emph{signal magnitude attacks} on the actuators and sensors, respectively. 
These matrices provide a means of incorporating information about the way the attacks affect the system, e.g., when the same attack is injected to an actuator and a sensor, or if some states are \emph{not} attacked, according to a particular hypothesis/mode $q_k$ about the signal attack location.
It is noteworthy that our approach specifies which actuators and sensors are \emph{not attacked}, in contrast to the approach in \cite{Mishra.2015}, which removes \emph{attacked} sensor measurements but is not applicable for actuator attacks.

The following are some examples for choosing $\mathcal{G}_k$, $\mathcal{H}_k$, $\mathcal{I}_G^{q_k}$ and $\mathcal{I}_H^{q_k}$  to encode additional information about the nature/structure of data injection attacks.

\begin{example}
For a 2-state system with 2 vulnerable actuators and 1 vulnerable sensor, if the same attack signal is injected into the first actuator and the sensor under the hypothesis corresponding to mode $q_k$, then $\mathcal{G}_k=I_2$, 
 $\mathcal{H}_k=1$, $\mathcal{I}_G^{q_k}=I_2$ and $\mathcal{I}_H^{q_k}=\begin{bmatrix} 1 & 0 \end{bmatrix}$. \yong{In this case, we obtain}  $G_k^{q_k}=I_2$ and $H_k^{q_k}=\begin{bmatrix} 1  & 0 \end{bmatrix}$.
\end{example}

\begin{example}
For a 3-state system with 3 actuators and 2 sensors, if the first actuator and the second sensor are not vulnerable and there are 3 attacks according to the hypothesis  corresponding to mode $q_k$, then $\mathcal{H}_k=\begin{bmatrix} 1  \\ 0 \end{bmatrix}$,  $\mathcal{I}_G^{q_k}=\begin{bmatrix} 1 & 0 & 0 \\ 0 & 1 & 0 \end{bmatrix}$, $\mathcal{I}_H^{q_k}=\begin{bmatrix} 0 & 0 & 1\end{bmatrix}$ and $\mathcal{G}_k=\begin{bmatrix} 0 & 0 \\  1 & 0  \\ 0 & 1 \end{bmatrix}$. \yong{In this case, we have} $G_k^{q_k}=\begin{bmatrix} 0 & 0 & 0 \\ 1  & 0 & 0 \\ 0 & 1 & 0 \end{bmatrix}$ and $H_k^{q_k}=\begin{bmatrix} 0 & 0 & 1 \\ 0  & 0 & 0 \end{bmatrix}$.
\end{example}

\paragraph{\textbf{System Assumptions}} 
We require that the system is
\emph{strongly detectable}\footnote{A linear system is \emph{strongly detectable} if $y_k=0$  $\forall k \geq 0$ implies $x_k \to 0$ as $k \to \infty$
for all initial states $x_0$ and input sequences $\{d_i\} _{i\in \mathbb{N}}$ (see \cite[Section 3.2]{Yong.Zhu.ea.Automatica16} for necessary and sufficient conditions for this property). } 
in each mode. In fact, strong detectability is \emph{necessary} for each mode in order to asymptotically correct the unknown attack signals (also necessary for deterministic systems \cite[Theorem 6]{Sundaram.2007}). Note also that strongly detectable systems need not be stable (cf. example in the proof of Theorem \ref{thm:max_errors}), but rather that the strongly undetectable modes of such systems are stable. 
%
\vspace{-0.15cm}
\paragraph{\textbf{Knowledge of the System Operator/Defender}}
The matrices $A^{q_k}_k$, $B^{q_k}_k$, $G^{q_k}_k$, $C^{q_k}_k$, $D^{q_k}_k$ and $H^{q_k}_k$ are known, as well as the system assumption of strong detectability in each mode. Moreover, the only knowledge of the defender concerning the malicious attacker is about (i) the upper bound on the \emph{number} of actuators/sensors that can be attacked, $p$, and (ii) the switching mechanisms/topologies that may be compromised. The upper bound $p$ in the former assumption allows the defender, in the worst case, to enumerate all possible combinations of $G^{q_k}_k$ and $H^{q_k}_k$. On the other hand, the latter assumption allows the defender to consider all possible topologies/modes of operations.

Alternatively, the above assumptions on the system and attackers can be viewed as recommendations or guidelines for system designers/operators to secure their systems as a preventative attack mitigation measure. For instance, the requirement of strong detectability allows system designers to determine which actuators or sensors need to be safeguarded to guarantee resilient estimation. 

\subsection{Security Problem Statement}

With the above characterization, the resilient state estimation problem is identical to the mode, state and input estimation problem, where the unknown inputs represent the unknown signal magnitude attacks and each mode/model represents an \emph{attack mode} (resulting from the unknown mode attacks and unknown signal attack locations). The \emph{objective} of this paper is:

\begin{problem}\label{prob}
Given a stochastic cyber-physical system described by \eqref{eq:system}, 
\begin{enumerate}
\item develop a \emph{resilient estimator} that asymptotically recovers \emph{unbiased} estimates of the system state and attack signal  irrespective of the location or magnitude of attacks on its actuators and sensors as well as switching mechanism/topology (mode) attacks, \label{prob1}
\item characterize fundamental limitations associated to the inference algorithm we developed, specifically the maximum number of asymptotically correctable signal attacks and the maximum number of required models with our multiple-model approach,  \label{prob2} 
\item study the conditions under which attacks can be detected or noticed 
(attack detection) and under which the attack strategy can be identified (attack identification) using the resilient state estimator we developed, and \label{prob3}
\item design tools for attack mitigation via attack-rejection feedback control. \label{prob4}
\end{enumerate}
\end{problem}

\section{Resilient State Estimation}\label{sec:attack}

To achieve resilient state estimation against switching attacks in the presence of stochastic process and measurement noise signals, we have shown in the previous section that the system under switching and false data injection attacks is representable as a hidden mode, switched linear system with unknown inputs given in \eqref{eq:system}. Since we do not know the true model (i.e., the attack strategy corresponding to the true \emph{mode attack} and \emph{signal location attack}), combinations of possible attack strategies need to be considered, and as such, the multiple-model estimation approach is a natural choice for solving this problem. Thus, we propose the use of the \yong{general purpose} multiple-model algorithm that we previously designed and applied to vehicle collision avoidance \cite{Yong.Zhu.ea.SICON16} as our resilient state estimation algorithm for solving Problem 1.\ref{prob1}. 

We will begin with a brief summary of the multiple-model inference algorithm and its nice properties \cite{Yong.Zhu.ea.SICON16}. Then, we consider Problem 1.\ref{prob2} and characterize some fundamental limitations to resilient estimation in Section \ref{sec:fundLim}. 

\subsection{Resilient State Estimation Algorithm and Properties} \label{sec:prelim}
\subsubsection{Multiple-Model State and Input Filtering Algorithm}
The multiple-model (MM) approach we take is inspired by the multiple-model filtering algorithms for hidden mode hybrid systems with \emph{known} inputs (e.g.,  \cite{Bar-Shalom.2004,Mazor.1998} and references therein), that have been widely applied for target tracking. Our multiple-model framework (see Figure \ref{fig:multipleModel}) consists of the parallel implementation of a \emph{bank of input and state filters} \cite{Yong.Zhu.ea.Automatica16} with each model corresponding to a system mode \yong{(i.e., of \emph{mode-matched filters that simultaneously estimate states and unknown inputs from sensor measurements and known inputs for each mode})}. The objective of the MM approach is then to decide which model/mode is the best representation of the current system mode as well as to  estimate the state and unknown input of the system based on this decision.

\yong{In this subsection, we provide an \emph{abbreviated review}} of the multiple-model approach for simultaneous mode, state and unknown input estimation given in \cite{Yong.Zhu.ea.SICON16}. Two variants of the multiple-model inference algorithm---static and dynamic---were proposed in that work. The latter provides a possibility of incorporating prior knowledge about the switching strategy of the attack. However, we assume no such knowledge about the malicious agent; thus we consider only the static variant (cf. Algorithm \ref{algorithm2} and Figure \ref{fig:staticMM}), which consists of: (i) a bank of mode-matched filters, and (ii) a likelihood-based approach for computing model probability. \\[-0.15cm]

\begin{figure}[!b]
\begin{center}
\includegraphics[scale=0.285]{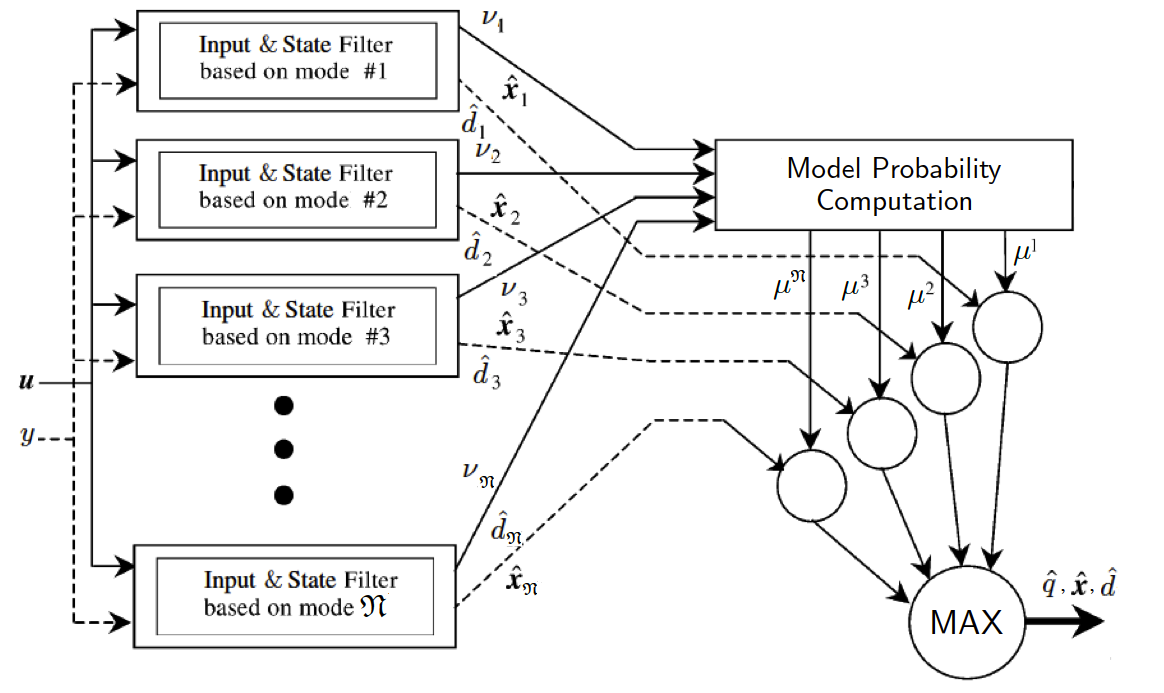}
\caption{Multiple-model framework for hidden mode, input and state estimation. \yong{Each model consists of a mode-matched filter that uses known inputs $u$ and outputs $y$ to estimate states $\hat{x}$ and unknown inputs $\hat{d}$, in addition to generalized innovations $\nu$ that in turn, determine the most probable mode $\hat{q}=\arg \max_{j \in \{1,\hdots,\mathfrak{N}\}} \mu^j$.}\label{fig:multipleModel} }
\end{center}
\end{figure}

\begin{figure}[!h]
\begin{center}
\includegraphics[scale=0.405]{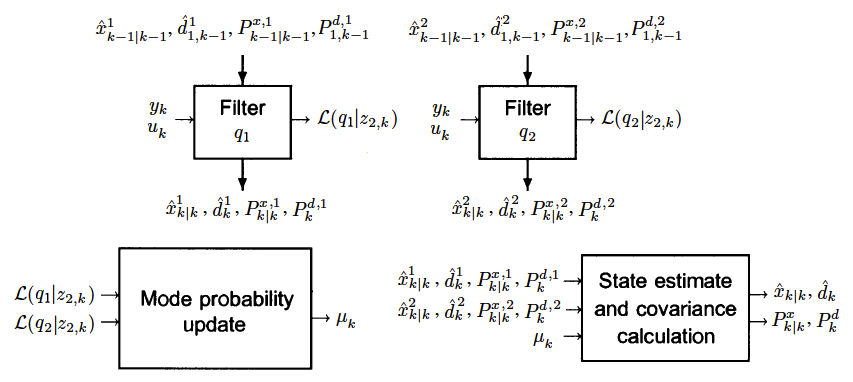}
\caption{Illustration of \yong{the exchange of signals/information between components of} a static multiple-model estimator with two 
mode-matched input and state filters. 
\label{fig:staticMM} }
\end{center}
\end{figure}

\emph{\textbf{Mode-Matched Filters.}}
The bank of filters is comprised of $\mathfrak{N}$ simultaneous state and input filters, one for each mode, based on the optimal recursive filter developed in \cite{Yong.Zhu.ea.Automatica16} (superscript $q_k$ omitted to increase readability; cf. Algorithm \ref{algorithm1}). \vspace{0.1cm} 

\noindent\emph{Unknown Input Estimation}: 
\begin{align}
\begin{array}{rl}
\hat{d}_{1,k} \hspace{-0.0cm}&=M_{1,k} (z_{1,k}-C_{1,k} \hat{x}_{k|k}-D_{1,k} u_k),\\
\hat{d}_{2,k-1}\hspace{-0.0cm}&=M_{2,k} (z_{2,k}-C_{2,k} \hat{x}_{k|k-1}-D_{2,k} u_k),\\
\hat{d}_{k-1}\hspace{-0.0cm}&= V_{1,k-1} \hat{d}_{1,k-1} + V_{2,k-1} \hat{d}_{2,k-1}, \end{array}\label{eq:d}
\end{align}
\emph{Time Update}:
\begin{align}
\hspace{-0.3cm}\begin{array}{rl}
 \hat{x}_{k|k-1}\hspace{-0.05cm}&=A_{k-1} \hat{x}_{k-1 | k-1} + B_{k-1} u_{k-1} + G_{1,k-1} \hat{d}_{1,k-1}, \\
\hat{x}^\star_{k|k}&=\hat{x}_{k|k-1}+G_{2,k-1} \hat{d}_{2,k-1}, \label{eq:xstar}
\end{array}\hspace{-0.3cm}
\end{align}
\emph{Measurement Update}:
\begin{align}
\hat{x}_{k|k}
&= \hat{x}^\star_{k|k} +\tilde{L}_k (z_{2,k}-C_{2,k} \hat{x}^\star_{k|k}-D_{2,k} u_k),  \quad \label{eq:stateEst}
\end{align}
where $\hat{x}_{k-1|k-1}$, $\hat{d}_{1,k-1}$, $\hat{d}_{2,k-1}$ and $\hat{d}_{k-1}$ denote the optimal estimates of $x_{k-1}$, $d_{1,k-1}$, ${d}_{2,k-1}$ and $d_{k-1}$. \yong{The remaining notations are best understood in the context of the system transformation described in Appendix \ref{sec:appendix}.} Due to space constraints, the filter derivation as well as necessary and sufficient conditions for filter stability and optimality are omitted; the reader is referred to \cite{Yong.Zhu.ea.Automatica16} for details. 

\emph{\textbf{Mode Probability Computation.}}
To compute the probability of each mode, the multiple-model approach exploits the whiteness property \cite[Theorem 1]{Yong.Zhu.ea.SICON16} of the \emph{generalized innovation} sequence, $\nu_k$, defined as
\begin{align}
&\nu_k \triangleq  \tilde{\Gamma}_k (z_{2,k}-C_{2,k} \hat{x}^\star_{k|k}-D_{2,k} u_k), 
\label{eq:g-inno} 
\end{align}
i.e., $\nu_k \sim \mathcal{N}(0,S_k)$ \yong{(a multivariate normal distribution)} with covariance $S_k\triangleq \mathbb{E}[\nu_k \nu_k^\top]=\tilde{\Gamma}_k \tilde{R}^\star_{2,k} \tilde{\Gamma}_k^\top$ and
where $\tilde{\Gamma}_k$ is chosen such that $S_k$ is invertible and $\tilde{R}^\star_{2,k}$ is given in Algorithm \ref{algorithm1}. \yong{In the context of resilient estimation, the generalized innovation represents the residual signal where the effects of false data injection attacks have been removed.
Then, using this ``attack-free'' generalized innovation,} we define the \emph{likelihood function} for each mode $q$ at time $k$ conditioned on all prior measurements $Z^{k-1}$:
\begin{align}
\mathcal{L}(q_k|z_{2,k})\triangleq 
 \mathcal{N}(\nu^{q_k}_k; 0, S_k^{q_k})\yong{=\frac{\exp(-\frac{1}{2}\nu^{q_k \, \top}_k (S_k^{q_k})^{-1}\nu^{q_k}_k)}{\sqrt{|2\pi S_k^{q_k}|}}. }
 \label{eq:likelihood} 
\end{align}
 Then, using Bayes' rule, the posterior probability \yong{$\mu_k^j$} for each mode $j$ is \yong{recursively computed from the prior probability $\mu_{k-1}^j$} using
\begin{align}
\mu^j_k 
= P(q_k=j|z_{1,k},z_{2,k},Z^{k-1}) = \frac{\mathcal{N}(\nu^j_k; 0, S_k^j) \mu^j_{k-1}}{\sum^\mathfrak{N}_{i=1} \mathcal{N}(\nu^i_k; 0, S_k^i) \mu^j_{k-1}}.
\label{eq:mu_j}
\end{align}
Note that a heuristic lower bound on all mode probabilities is imposed such that the  modes are kept alive in case of a switch in the attacker's strategy. Finally, based on the posterior mode probabilities, the most likely mode at each time $k$, \yong{$\hat{q}_k$,} and the associated state and input estimates and covariances, \yong{$\hat{x}_{k|k}$, $\hat{d}_{k}$, ${P}^x_{k|k}$ and ${P}^d_{k}$}, are determined: 
\begin{align}
\begin{array}{l}
\hat{q}_k  =j^*  =  \arg \max_{j \in \{1,\hdots,\mathfrak{N}\}}  \mu^j_k, \,
\hat{x}_{k|k}  = \hat{x}^{j^*}_{k|k},\, 
 \hat{d}_{k}= \hat{d}^{j^*}_{k},\, 
{P}^x_{k|k}= {P}^{x,j^*}_{k|k}, \, 
{P}^d_{k}= {P}^{d,j^*}_{k}. 
\end{array}\label{eq:outputcombi}
\end{align}

\begin{algorithm}[!bp]
\DontPrintSemicolon 
\KwIn{$q_k$, $\hat{x}_{k-1|k-1}^{q_k}$, $\hat{d}_{1,k-1}^{q_k}$, $P^{x,q_k}_{k-1|k-1}$, $P^{xd,q_k}_{1,k-1}$, $P^{d,q_k}_{1,k-1}$ [superscript $q_k$ omitted in the following]}
$\triangleright$ Estimation of $d_{2,k-1}$ and $d_{k-1}$\;
 $\hat{A}_{k-1}=A_{k-1}-G_{1,k-1}M_{1,k-1} C_{1,k-1}$;\;
 $\hat{Q}_{k-1}=G_{1,k-1}M_{1,k-1}R_{1,k-1}M_{1,k-1}^\top G_{1,k-1}^\top +Q_{k-1}$;\;
$\tilde{P}_k=\hat{A}_{k-1} P^{x}_{k-1|k-1} \hat{A}_{k-1}^\top +\hat{Q}_{k-1}$;\;
$\tilde{R}_{2,k}=C_{2,k} \tilde{P}_k C_{2,k}^\top+R_{2,k}$;\;
$P^d_{2,k-1}=(G_{2,k-1}^\top C_{2,k}^\top \tilde{R}_{2,k}^{-1} C_{2,k} G_{2,k-1})^{-1}$;\;
$M_{2,k}=P^d_{2,k-1} G_{2,k-1}^\top C_{2,k}^\top \tilde{R}^{-1}_{2,k}$;\;
$\hat{x}_{k|k-1}=A_{k-1} \hat{x}_{k-1|k-1}+B_{k-1} u_{k-1}+G_{1,k-1} \hat{d}_{1,k-1}$;\;
$\hat{d}_{2,k-1}=M_{2,k} (z_{2,k}-C_{2,k} \hat{x}_{k|k-1}-D_{2,k} u_k)$;\;
$\hat{d}_{k-1} =V_{1,k-1} \hat{d}_{1,k-1} + V_{2,k-1} \hat{d}_{2,k-1}$; \;
$P^d_{12,k-1}=M_{1,k-1} C_{1,k-1} P^{x}_{k-1|k-1} A_{k-1}^\top C_{2,k}^\top M_{2,k}^\top-P^{d}_{1,k-1} G_{1,k-1}^\top C_{2,k}^\top M_{2,k}^\top$;\;
$P^d_{k-1}=V_{k-1} \begin{bmatrix} P^{d}_{1,k-1} & P^d_{12,k-1} \\ P^{d \top}_{12,k-1} & P^d_{2,k-1} \end{bmatrix} V_{k-1}^\top$;\;
$\triangleright$ Time update\;
$\hat{x}^\star_{k|k}=\hat{x}_{k|k-1}+G_{2,k-1} \hat{d}_{2,k-1}$;\;
$P^{\star x}_{k|k}=G_{2,k-1} M_{2,k} R_{2,k} M_{2,k}^\top G_{2,k}^\top+(I-G_{2,k-1}M_{2,k}C_{2,k})\tilde{P}_k(I-G_{2,k-1}M_{2,k}C_{2,k})^\top$;\;
$\tilde{R}^\star_{2,k}=C_{2,k} P^{\star x}_{k|k} C_{2,k}^\top +R_{2,k} -C_{2,k} G_{2,k-1} M_{2,k} R_{2,k}-R_{2,k} M_{2,k}^\top G_{2,k-1}^\top C_{2,k}$;\;
$\triangleright$ Measurement update\;
$\breve{P}_k=P^{\star x}_{k|k} C_{2,k}^\top - G_{2,k-1} M_{2,k} R_{2,k}$; \;
$\tilde{L}_k=\breve{P}_k\tilde{R}^{\star \dagger}_{2,k}$; \;
$\hat{x}_{k|k}=\hat{x}^\star_{k|k}+\tilde{L}_k(z_{2,k}-C_{2,k} \hat{x}^\star_{k|k}-D_{2,k} u_k)$;\;
$P^x_{k|k}=\tilde{L}_k R^\star_{2,k} \tilde{L}_k^\top- \tilde{L}_k \breve{P}_k^\top - \breve{P}_k \tilde{L}_k^\top$;\;
$\triangleright$ Estimation of $d_{1,k}$\;
$\tilde{R}_{1,k}=C_{1,k}P^x_{k|k}C_{1,k}^\top+R_{1,k}$; \;
$M_{1,k}=\Sigma_k^{-1}$; \;
$P^d_{1,k}=M_{1,k} \tilde{R}_{1,k} M_{1,k}$; \;
$\hat{d}_{1,k}=M_{1,k} (z_{1,k}-C_{1,k} \hat{x}_{k|k}-D_{1,k} u_k)$; \;
\Return{$\tilde{R}^{\star,q_k}_{2,k}$, $\hat{x}^{\star,q_k}_{k|k}$}\;
\caption{{\sc Opt-Filter} finds the optimal state and input estimates for mode $q_k$}
\label{algorithm1}
\end{algorithm}

\begin{algorithm}
\DontPrintSemicolon 
\KwIn{$\forall  j \in  \{1,2,\hdots,\mathfrak{N}\}$: $\hat{x}^j_{0|0}$; $\mu^j_0$; \; \hspace{1cm} $\hat{d}^j_{1,0} = (\Sigma_0^{j})^{-1} (z^j_{1,0}- C^j_{1,0} \hat{x}^j_{0|0} -D^j_{1,0} u_0)$; $P^{d,j}_{1,0}=(\Sigma_0^{j})^{-1}(C^j_{1,0} P^{x,j}_{0|0} C^{j \top}_{1,0}+R^j_{1,0})(\Sigma_0^{j})^{-1}$;}
 \For {$k =1$ to $N$}{
  \For {$j =1$ to $\mathfrak{N}$}{
$\triangleright$ Mode-Matched Filtering\;
Run {\sc Opt-Filter}($j$,$\hat{x}_{k-1|k-1}^{j}$, $\hat{d}_{1,k-1}^{j}$, $P^{x,j}_{k-1|k-1},P^{d,j}_{1,k-1})$; \;
$\overline{\nu}^j_k\triangleq z^j_{2,k}-C^j_{2,k} \hat{x}^{\star,j}_{k|k}-D^j_{2,k} u_k$; \;
$\mathcal{L}(j|z^j_{2,k})=\frac{1}{(2\pi)^{p^j_{\tilde{R}}/2} |\tilde{R}^{j,\star}_{2,k}|_+^{1/2}}\exp \left(-\frac{\overline{\nu}_k^{j \top} \tilde{R}^{j,\star  \dagger}_{2,k}  \overline{\nu}^j_k }{2}\right)$; \;
}
\For {$j =1$ to $\mathfrak{N}$}{
$\triangleright$ Mode Probability Update (small $\epsilon>0$)\;
 $\overline{\mu}^j_k=\max\{\mathcal{L}(j|z^j_{2,k}) \mu_{k-1}^j,\epsilon\}$; \;
}
\For {$j =1$ to $\mathfrak{N}$}{
$\triangleright$ Mode Probability Update (normalization)\;
$\mu^j_k=\frac{\overline{\mu}^j_k}{\sum^\mathfrak{N}_{\ell=1} \overline{\mu}^\ell_k}$; \;
$\triangleright$ {Output}\;
Compute \eqref{eq:outputcombi}; \;
}
}
\Return{$\hat{x}_{k|k}$, $P^{x}_{k|k}$ }\;
\caption{{\sc Resilient State Estimator (Static-MM-Estimator)} finds resilient state estimates corresponding to most likely mode}
\label{algorithm2}
\end{algorithm}

\subsubsection{Properties of the Resilient State Estimator}\label{sec:ident}

Our previous work \cite{Yong.Zhu.ea.SICON16} shows that the resilient state estimator has nice asymptotic properties, namely \yong{(i) \emph{mean consistency}, i.e., the geometric mean of the mode probability for the true model $\ast \in \mathcal{Q}$ asymptotically converges to 1 for all initial mode probabilities \cite[Theorem 8]{Yong.Zhu.ea.SICON16} and (ii) \emph{asymptotic optimality}, i.e., the state and input estimates in \eqref{eq:outputcombi} converge on average 
 to optimal state and input estimates in the minimum variance unbiased sense \cite[Corollary 13]{Yong.Zhu.ea.SICON16}.} 

\subsection{Fundamental Limitations of Attack-Resilient Estimation} \label{sec:fundLim}

Next, we consider Problem 1.\ref{prob2} and characterize fundamental limitations of the attack-resilient estimation problem and of our multiple mode filtering approach, \yong{which constitutes one of the main results in this paper}. First, assuming for the moment that there is only one mode of operation (no switching attacks), we will upper bound the number of asymptotically correctable signal attacks/errors (i.e., signal attacks whose effects can be asymptotically negated or cancelled such that unbiased state estimates are still available). 
Then, we provide the maximum number of models that is required by our multiple-model approach to obtain resilient estimates despite 
attacks. 

\subsubsection{Number of Asymptotically Correctable Signal Attacks}
More formally, we introduce the following definition for only data injection attacks, which in itself is an interesting research problem in the CPS security community.  
\begin{definition}[Asymptotically/Exponentially Correctable Signal Attacks]\label{def:1}
We say that $p$ actuators and sensors signal attacks are asymptotically/exponentially correctable, if for any initial state $x_0 \in \mathbb{R}^n$ and signal attack sequence $\{ d_j\}_{j \in \mathbb{N}}$ in $\mathbb{R}^p$, we have an estimator such that the estimate bias asymptotically/exponentially tends to zero, i.e., $\mathbb{E}[\hat{x}_k-x_k] \to 0$ (and $\mathbb{E}[\hat{d}_{k-1}-d_{k-1}] \to 0$) as $k \to \infty$. 
\end{definition}

\begin{remark} \label{rem:1}
Note the distinction in the definitions of asymptotically/exponentially correctable signal attacks in Definition \ref{def:1} and of correctable signal attacks in \cite[Definition 1]{Fawzi.2014}. Their definition implies finite-time estimation and is related to strong observability \cite{Fawzi.2014}. 
Due to the new challenges of further considering stochastic noise signals and mode switching, we adopt the weaker notion of asymptotic estimation, which only requires a `weaker' condition of strong detectability (implied by strong observability \cite{Yong.Zhu.ea.Automatica16}). 
This is mainly for the sake of theoretical analysis. Simulation results demonstrate that our algorithm has practically finite-time convergence.
\end{remark} 

To derive an estimation-theoretic upper bound on the maximum number of signal attacks that can be asymptotically corrected, we assume 
that the true model or mode ($q_k=*$) is known. 
Thus, the resilient state estimation problem is identical to the state and input estimation problem in \cite{Yong.Zhu.ea.Automatica16}, where the unknown inputs represent the attacks on the actuator and sensor signals. It has been shown in \cite{Yong.Zhu.ea.Automatica16} that unbiased states (and also unknown inputs) can be obtained asymptotically (exponentially) only if the system is strongly detectable (cf. \cite{Yong.Zhu.ea.Automatica16} for more details, e.g. regarding filter stability and existence). With this in mind, the upper bound on the maximum number of signal attacks that can be asymptotically (exponentially) corrected is given as follows: 

\begin{theorem}[Maximum Correctable Data Injection Attacks] \label{thm:max_errors}
The maximum number of asymptotically (exponentially fast) correctable actuators and sensors signal attacks, $p^*$, for system \eqref{eq:system}  
is equal to the number of sensors, $l$, i.e., $p^* \leq l$ and the upper bound is achievable. 
\end{theorem}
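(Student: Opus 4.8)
The plan is to get the bound from left-invertibility of the attack-to-output channel and to establish tightness with one small witness system. By Definition~\ref{def:1}, $p$ attacks are asymptotically (resp.\ exponentially) correctable iff some estimator achieves $\mathbb{E}[\hat x_k-x_k]\to0$ and $\mathbb{E}[\hat d_{k-1}-d_{k-1}]\to0$ (resp.\ exponentially fast), and, as recalled before the theorem, when the true mode is known this is the simultaneous input--state estimation problem of \cite{Yong.Zhu.ea.Automatica16}. For the upper bound I would use the necessary condition --- part of the structural conditions of \cite[Section~3.2]{Yong.Zhu.ea.Automatica16} --- that the map $d\mapsto y$ be left-invertible: if it were not, there would be a nonzero attack trajectory $\{d_k\}$, generated from $x_0=0$, producing the same output sequence as $d_k\equiv0$ (an output-zeroing trajectory with nonzero input part; one may arrange $d_k$ to have a nonzero constant limit), so no estimator could have $\mathbb{E}[\hat d_{k-1}-d_{k-1}]\to0$ for both this trajectory and the zero trajectory. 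Left-invertibility is equivalent to the Rosenbrock matrix $\left[\begin{smallmatrix}zI_n-A&-G\\ C&H\end{smallmatrix}\right]$ --- or, in the time-varying case, that of the lifted system; see \cite{Yong.Zhu.ea.Automatica16} --- having normal column rank $n+p$; since it has only $n+\ell$ rows, $n+p\le n+\ell$, i.e.\ $p\le\ell$, hence $p^*\le\ell$. The argument applies verbatim to the exponential notion.

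For achievability I would exhibit the scalar system $x_{k+1}=ax_k+d_k+w_k$, $y_k=x_k+v_k$ with $a\in\mathbb{R}$ arbitrary --- take $|a|>1$ --- so that $n=\ell=p=1$, $G=C=1$, $H=0$, and $Q,R\succ0$. Here $y_k\equiv0$ forces $x_k\equiv0$, so the system is strongly detectable (indeed strongly observable) and $\left[\begin{smallmatrix}z-a&-1\\ 1&0\end{smallmatrix}\right]$ has rank $2=n+p$ for every $z$, so $d\mapsto y$ is left-invertible; taking $|a|>1$ also furnishes the strongly detectable but unstable instance referred to after the System Assumptions. By \cite{Yong.Zhu.ea.Automatica16} the associated input--state filter is then well defined and asymptotically (here exponentially) unbiased in both $x_k$ and $d_{k-1}$, so this single attack is asymptotically (exponentially) correctable; thus for this system $\ell$ signal attacks are correctable, showing the bound $p^*\le\ell$ is tight. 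Stacking $\ell$ decoupled copies --- or, for mixed attacks, placing one actuator attack and one sensor attack on a two-state system whose unobserved subspace is stable --- yields witnesses for arbitrary $\ell$. Together with the upper bound, $p^*=\ell$ and the bound is achievable.

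The main obstacle I anticipate is the reduction step: making precise that asymptotic correctability --- in particular the requirement on the \emph{input} estimate --- forces left-invertibility even in the presence of the direct feedthrough $H_k$, the unbounded stochastic noise, and the time-varying matrices, so that the dimension count $p\le\ell$ is rigorous; for this I would invoke the exact necessary-and-sufficient conditions of \cite[Section~3.2]{Yong.Zhu.ea.Automatica16}. Everything after that --- the dimension inequality and checking that the witness system meets the filter's standing hypotheses (e.g.\ $R_k\succ0$) --- is routine.
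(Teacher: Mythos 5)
Your proof is correct and follows essentially the same route as the paper's: the upper bound comes from the same row-count on the $(n+\ell)\times(n+p)$ system (Rosenbrock) matrix---the paper phrases the necessary rank condition as strong detectability, i.e.\ rank $n+p^\ast$ for all $|z|\ge 1$, rather than left-invertibility, but the dimension count is identical---and achievability is shown by exhibiting a strongly detectable witness with $p=\ell$ and invoking the filter of \cite{Yong.Zhu.ea.Automatica16}. The only differences are cosmetic: the paper's witnesses are a two-state smart-grid example with a compromised sensor ($G=0$, $H=1$) and a two-state unstable example mixing one actuator and one sensor attack, rather than your scalar system and its stacked copies.
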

\begin{proof} 
A necessary and sufficient condition for strong detectability (with the true model $q_k=*$) 
is given in \cite{Yong.Zhu.ea.Automatica16} as
\begin{align} \label{eq:strongDet}
{\rm rk}\begin{bmatrix} zI-A^\ast & -G^\ast \\ C^\ast & H^\ast \end{bmatrix}=n+p^\ast, \ \forall z \in \mathbb{C}, |z| \geq 1.
\end{align}
Since the above system matrix has only $n+l$ rows, it follows that its rank is at most $n+l$. Thus, from the necessary condition for \eqref{eq:strongDet}, we obtain $n+p^* \leq n+l \Rightarrow p^*\leq l$. We show that the upper bound is achievable using the example of the discrete-time equivalent model (with time step $\Delta t=0.1 s$) of the smart grid case study in \cite{Liu.2013}, where in both circuit breaker modes, $A=\begin{bmatrix}  \ \ 0.9520 &   0.0936 \\   -0.9358  &  0.8584 \end{bmatrix}$ and $G=\begin{bmatrix} 0 \\ 0 \end{bmatrix}$. If the first state is measured but compromised (e.g., $C=\begin{bmatrix} 1 & 0 \end{bmatrix}$ and $H=1$ $\Rightarrow$ $p^\ast=l$), it can be verified that the system is strongly detectable, i.e., with two invariant zeros at $\{0.9945 \pm 0.0311j\}$ that are strictly in the unit circle in the complex plane. 
Similarly, it can be verified that the unstable system with matrices $A=\begin{bmatrix} 1.5 & 1 \\ 0  & 0.1 \end{bmatrix} $, $G=\begin{bmatrix} 1 & 0 \\ 0  & 0 \end{bmatrix}$, $C=\begin{bmatrix} 1 & 0 \\ 0  & 1 \end{bmatrix}$ and $H=\begin{bmatrix} 0 & 0 \\ 0  & 1 \end{bmatrix}$ (i.e., with $p^*=l$) has an invariant zero at $\{0.1\}$ and is hence strongly detectable. Thus, in both cases, the optimal filter in \cite{Yong.Zhu.ea.Automatica16} can be applied and unbiased state estimates can be asymptotically achieved when $p^\ast=l$.
\end{proof} 

The theorem above implies that for each mode of operation that results from switching attacks, the total number of vulnerable actuators and sensors must not exceed the number of measurements. Moreover, it is worth reiterating that the necessity of strong detectability can serve as a guide to determine which actuators or sensors need to be safeguarded to guarantee resilient estimation, for preventative attack mitigation. Since strong detectability is a system property that is independent
of the filter design, the necessity of this property can be viewed as a fundamental limitation for resilient estimation, i.e., the ability to asymptotically/exponentially obtain
unbiased estimates.

\subsubsection{Number of Required Models for Estimation Resilience}\label{sec:numberModels}

Then, in a similar spirit as the attack set identification approach of \cite{Weimer.2012,Pasqualetti.2013} in which a bank of deterministic residuals are computed to determine the true attack set (but not the magnitude of the attacks), we consider a bank of filters to find the most probable model/mode. We now characterize the maximum  number of models $\mathfrak{N}^\ast$ that need to be considered with the multiple-model approach in Section \ref{sec:prelim} \yong{(which is independent of the size of the system, e.g., the number of buses in a power system)}: 

\begin{theorem}[Maximum Number of Models/Modes]  \label{thm:maxModels}
Suppose there are $t_a$ actuators and $t_s$ sensors, and at most $p \leq l$ of these signals are attacked. Suppose also that there are $t_m$ possible attack modes (\emph{mode attack}). Then, the combinatorial number of all possible models, and hence the maximum number of models that need to be considered with the multiple-model approach, is $$\mathfrak{N}^\ast=t_m \binom{t_a+t_s}{p}=t_m \binom{t_a+t_s}{t_a+t_s-p}.$$ 
\end{theorem}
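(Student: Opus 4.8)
The plan is to prove this as a pure counting statement, since $\mathfrak{N}^\ast$ is by construction the cardinality of the model set $\mathcal{Q} = \mathcal{Q}^m \times \mathcal{Q}^d$ introduced in Section~\ref{sec:sysDes}: Algorithm~\ref{algorithm2} runs exactly one mode-matched filter per element of $\mathcal{Q}$, and $\mathcal{Q}$ exhausts all attack possibilities consistent with the defender's knowledge, so $|\mathcal{Q}|$ is simultaneously the number of models instantiated and, in the worst case, the number required. I would first observe that the two factors are independent: $\mathcal{Q}^m$ enumerates exactly the $t_m$ attack modes (compromised switching mechanisms / network topologies), so $|\mathcal{Q}^m| = t_m$ and $\mathfrak{N}^\ast = t_m\,|\mathcal{Q}^d|$. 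It then remains to count $|\mathcal{Q}^d|$, the number of distinct \emph{signal location} hypotheses needed for a fixed mode of operation.

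Next I would identify each element of $\mathcal{Q}^d$ with a choice of which of the $t_a$ vulnerable actuators and which of the $t_s$ vulnerable sensors are attacked, equivalently with the pair of index matrices $\mathcal{I}_G^{q_k} \in \mathbb{R}^{t_a \times p}$, $\mathcal{I}_H^{q_k} \in \mathbb{R}^{t_s \times p}$ of Section~\ref{sec:sysDes}, which select $p_a^{q_k}$ columns out of $t_a$ and $p_s^{q_k}$ columns out of $t_s$ with $p_a^{q_k}+p_s^{q_k}=p$. The key point here is that it suffices to enumerate hypotheses with \emph{exactly} $p$ attacked signals rather than at most $p$: a hypothesis allocating $p$ attack channels subsumes, as the degenerate case in which some entries of $d_k \in \mathbb{R}^p$ vanish identically, every realization with strictly fewer than $p$ nonzero attack components, which is also consistent with $d_k$ having fixed dimension $p$ throughout \eqref{eq:system}. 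Hence the worst-case set of location hypotheses is precisely the set of size-$p$ subsets of the $t_a+t_s$ vulnerable signals (distinct column selections yield distinct $G^{q_k}_k,H^{q_k}_k$ since $\mathcal{G}_k,\mathcal{H}_k$ have $t_a$, resp.\ $t_s$, independent columns, so there is no over-counting).

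For a fixed actuator/sensor split $(p_a,p_s)$ with $p_a+p_s=p$ the number of such hypotheses is $\binom{t_a}{p_a}\binom{t_s}{p_s}$, and summing over all admissible splits gives
\begin{align*}
|\mathcal{Q}^d| = \sum_{p_a+p_s=p} \binom{t_a}{p_a}\binom{t_s}{p_s} = \binom{t_a+t_s}{p},
\end{align*}
by Vandermonde's convolution identity (with the convention $\binom{a}{b}=0$ for $b<0$ or $b>a$, so that only splits with $0\le p_a\le t_a$ and $0\le p_s\le t_s$ contribute). Multiplying by $t_m$ yields $\mathfrak{N}^\ast = t_m\binom{t_a+t_s}{p}$, and the second form follows from the reflection symmetry $\binom{N}{p}=\binom{N}{N-p}$ with $N=t_a+t_s$. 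I would close by noting that this count depends only on $t_a$, $t_s$, $t_m$ and $p$ and not on the state dimension $n$, which justifies the parenthetical claim that $\mathfrak{N}^\ast$ does not scale with the size of the system.

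As for difficulty, the argument is essentially bookkeeping and I do not expect a genuine obstacle; the one step meriting careful justification — and the one I would write out most explicitly — is the reduction from "at most $p$ attacked signals" to "exactly $p$ attacked signals," i.e.\ that omitting the models with fewer attack channels loses no resilience. The product decomposition of $\mathcal{Q}$ and the Vandermonde summation are routine.
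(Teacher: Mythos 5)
Your argument is correct and follows essentially the same route as the paper: reduce ``at most $p$'' to ``exactly $p$'' attacks by noting that hypotheses with fewer attacks are subsumed via identically zero components of $d_k$, then count size-$p$ subsets of the $t_a+t_s$ vulnerable signals for each of the $t_m$ modes (your Vandermonde summation is just a more detailed derivation of the same $\binom{t_a+t_s}{p}$ that the paper obtains by direct selection). The only element you gesture at but do not exhibit is the paper's concrete example showing that $\mathfrak{N}^\ast$ is genuinely only an upper bound, i.e.\ that strong detectability can hold with strictly fewer models.
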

\begin{proof}
It is sufficient to consider only models corresponding to the maximum number of attacks $p$. All models with strictly less than $p$ attacks are contained in this set of models with the attack vectors having some identically zero elements for which our estimation algorithm is still applicable. Thus, we only need to consider combinations of $p$ attacks among $t_a+t_s$ sensors and actuators for each of the $t_m$ attack modes of operation/topologies. Note that this number is the maximum because resilience may be achievable with less models: For instance, when $t_m=1$, $t_a=0$ and $t_s=2=l$, $p=1$, $A=\begin{bmatrix} 0.1 & 1 \\ 0 & 0.2 \end{bmatrix}$ and $C=I_2$, we have $\mathfrak{N}^\ast=2$, but it can be verified that with $G=0_{2 \times 2}$ and $H=I_2$ (only one model, i.e., $1=\mathfrak{N}<\mathfrak{N}^\ast$), the system is strongly detectable.
\end{proof} 

\begin{remark}
If $\mathfrak{N} >1$, the multiple-model approach requires that the number of attacks is strictly less than the number of sensor measurements, i.e., $p<l$. Otherwise, the generalized innovation \eqref{eq:g-inno} is empty and we have no means of selecting the `best' model, i.e., of computing mode probabilities.
\end{remark}

We now discuss how the availability of additional knowledge about the data injection attack strategies may influence the number of models that needs to be considered in relation to the number of models $\mathfrak{N}^*$ in Theorem \ref{thm:maxModels} when such knowledge is not available. 
Suppose we have additional knowledge that there are at most $n_a \leq t_a$ and $n_s \leq t_s$ attacks on the actuators and sensors, respectively, with a total of $p$ attacks (where $p \leq l$ and $p \leq n_a+n_s$), then the maximum number of models that are required, $$ \mathfrak{N}^\ast=t_m \sum_{i=0}^{\min\{n_a,p\}} \binom{t_a}{i} \binom{t_s}{\min\{p-i,n_s\}},$$
is less than the number required in combinatorial case in Theorem \ref{thm:maxModels}. 

On the other hand, the knowledge that less actuators or sensors are vulnerable may actually increase the number of models, as shown in the following example with $t_m=1$ (one mode of operation), $n_a=0$ (no attacks on actuators), $A=\begin{bmatrix} 0.1 & 1 \\ 0 & 1.2 \end{bmatrix}$ and $C=I$. Suppose only one of the two sensors is vulnerable, $n_s=p=1 < l=2$, then we have to consider 2 models with $G=\begin{bmatrix}0\\ 0 \end{bmatrix}$, $H_1=\begin{bmatrix} 1 \\ 0 \end{bmatrix}$ and $H_2=\begin{bmatrix} 0 \\ 1\end{bmatrix}$. On the other hand, if both sensors are vulnerable $n_s=p=2$, then only one model is required with $G=0$ and $H=I$. Note, however, that the latter case is not strongly detectable with zeros at $\{0.1,1.2\}$, thus this system violates the necessary condition in  \cite{Yong.Zhu.ea.Automatica16} for obtaining resilient estimates; but both systems in the former case can be verified to be strongly detectable. 

\section{Attack Detection and Identification}

In this section, we consider Problem 1.\ref{prob3} and study the consequence of the asymptotic properties of the resilient state estimation algorithm (static MM filter) in Section \ref{sec:ident} on attack detection and identification. 


First, note that the resilient state estimation algorithm we presented in the previous section is oblivious to whether the switching and false data injection attacks on the system are strategic. Nonetheless, we would like to understand how strategic attacks can be detected or identified by our algorithm. Specifically, we consider strategic attackers whose goal is to choose data injection signals $d_k$ and the true mode $* \in \mathcal{Q}$ in order to mislead the system operator/defender into believing that the mode of operation is $q \in \mathcal{Q}, q \neq *$. If such an attack action cannot be reconstructed/identified by the system operated, then we refer to this attack as \emph{unidentifiable}. If, in addition, the attack is not noticeable, then this attack is \emph{undetectable}, formally defined as follows: 


\begin{definition}[Switching and Data Injection Attack Detection] \label{def:detection} A switching and data injection attack is detected if 
the true mode $* \in \mathcal{Q}$ (chosen by attacker) has the maximum mean probability when using the resilient state estimation algorithm in Algorithm \ref{algorithm2} or is not distinguishable from another mode $q \in \mathcal{Q}, q \neq *$ (chosen by defender) on average. 
\end{definition}

\begin{definition}[Switching and Data Injection Attack Identification] \label{def:identification} A switching and data injection attack strategy is identified if the attack is detected and in addition, the true mode $* \in \mathcal{Q}$ is uniquely determined on average, which reveals the chosen \emph{mode attack} and \emph{signal attack location}, and asymptotically unbiased estimates of attack signals $d_k$ can be obtained, i.e., the \emph{signal magnitude attack} is reliably estimated. 
\end{definition}

From the above definitions, it is clear that if an attack is undetectable, then it will also be unidentifiable. On the flip side, if an attack is identifiable, then it is  detectable. Note, however, that attack detection or identification is not needed for obtaining resilient state estimates. 
For instance, in the trivial case that there are no attacks $d_k=0$ for all $k$, the state estimates of all models would perform equally well. This means that the attacks need not be detected or identified for obtaining resilient estimates. 

\subsection{Attack Detection}

Fortunately, our resilient state estimation algorithm in Algorithm \ref{algorithm2} guarantees that an attack will always be detected by Definition \ref{def:detection}.
\begin{theorem}[Attack Detection] \label{thm:Detguarantee}
The resilient state estimation algorithm in Algorithm \ref{algorithm2} (with ratios of prior being identically 1) guarantees that switching and data injection attacks are always detectable.
\end{theorem}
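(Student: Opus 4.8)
The plan is to establish a dichotomy based on whether the attacker's true mode $\ast$ is statistically distinguishable, in terms of the generalized innovation \eqref{eq:g-inno}, from every competing mode $q \in \mathcal{Q}$, and to verify that Definition~\ref{def:detection} is satisfied in each branch. Since the mode probabilities are normalized so that $\sum_{j} \mu_k^j = 1$ at every step, showing that $\mu_k^\ast$ is asymptotically largest on average is equivalent to showing that every competitor's mean probability is asymptotically no larger; this reformulation is what makes the two branches tractable.

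First I would treat the case in which $\ast$ is the unique mode consistent with the observed data, i.e., for every $q \neq \ast$ the mode-matched filter for $q$ produces a generalized innovation whose distribution differs from $\mathcal{N}(0,S_k^\ast)$. Here I invoke the mean-consistency property recalled in Section~\ref{sec:ident} (Theorem~8 of \cite{Yong.Zhu.ea.SICON16}): the geometric mean of $\mu_k^\ast$ converges to $1$ for any initial mode probabilities. Because the $\mu_k^j$ sum to one, this forces the geometric mean of every $\mu_k^q$, $q\neq\ast$, to converge to $0$, so $\ast$ attains the maximum mean probability asymptotically and the first clause of Definition~\ref{def:detection} holds. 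I would also note that the heuristic floor $\epsilon$ in Algorithm~\ref{algorithm2} does not interfere, since in the limit $\mu_k^\ast \to 1 > \epsilon$.

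In the remaining case there exists $q \neq \ast$ that is indistinguishable from $\ast$: the mode-matched filters for $\ast$ and $q$ yield generalized innovation sequences with the same distribution, hence the same likelihood \eqref{eq:likelihood} along almost every trajectory. Under the hypothesis that the ratios of prior mode probabilities equal one (so $\mu_0^\ast = \mu_0^q$), the recursion \eqref{eq:mu_j} propagates this equality: dividing the update for $\mu_k^\ast$ by that for $\mu_k^q$ cancels the common normalization and the (equal) likelihoods, so $\mu_k^\ast = \mu_k^q$ on average for all $k$. Thus $\ast$ is not distinguishable from $q$ on average, and the second clause of Definition~\ref{def:detection} is met. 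Combining the two cases, the attack is detected in all situations, which is the claim.

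I expect the main obstacle to be the careful justification, in the second case, that an ``indistinguishable'' mode really does force equal generalized-innovation distributions (and hence equal likelihoods) between the two mode-matched filters---this depends on the internal structure of {\sc Opt-Filter} (Algorithm~\ref{algorithm1}) and on what exactly ``indistinguishable'' is taken to mean for \eqref{eq:system}---and, relatedly, the bookkeeping needed to turn the pathwise/expectation statements into the ``on average'' statements used in Definition~\ref{def:detection}, including checking that the $\epsilon$-flooring in Algorithm~\ref{algorithm2} acts symmetrically on the two tied modes so that it cannot break the equality $\mu_k^\ast = \mu_k^q$.
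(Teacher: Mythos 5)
Your overall dichotomy---either every competing mode is distinguishable from the true one, in which case mean consistency delivers the first clause of Definition~\ref{def:detection}, or some mode is indistinguishable, in which case the second clause applies---is the same dichotomy the paper's proof arrives at, and your first branch is sound: the paper's own remark in Section~5.2 (failure of mean consistency forces some competing mode to have a generalized innovation identically distributed with the true one) is exactly the contrapositive you need to invoke Theorem~8 of \cite{Yong.Zhu.ea.SICON16}. The paper, however, does not split cases at all: it works directly with the closed-form ratio of \emph{geometric means} of the mode probabilities (Lemma~14 of \cite{Yong.Zhu.ea.SICON16}), whose exponent is a sum of Kullback--Leibler divergences $D(f_\ell^\ast \| f_\ell^i) \geq 0$; nonnegativity shows the true mode's mean probability can never fall below any competitor's relative to the unit prior ratio, with equality exactly when the innovation densities coincide.

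The genuine gap is in your second branch. From ``the two mode-matched filters yield generalized innovation sequences with the same distribution'' you conclude ``hence the same likelihood \eqref{eq:likelihood} along almost every trajectory,'' and then divide the recursions \eqref{eq:mu_j} to propagate $\mu_k^\ast = \mu_k^q$. Equality in distribution of $\nu_k^\ast$ and $\nu_k^{q|\ast}$ does not make the realized innovations equal, so the realized likelihoods $\mathcal{N}(\nu_k^\ast;0,S_k^\ast)$ and $\mathcal{N}(\nu_k^{q|\ast};0,S_k^q)$ generally differ sample path by sample path; the pathwise cancellation you perform in \eqref{eq:mu_j} is therefore unavailable, and taking expectations afterwards does not rescue it (the ratio of two identically distributed random variables is not $1$, even in mean). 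What equality of distributions \emph{does} give you is equality of the expected log-likelihoods, i.e., vanishing of the KL divergence, and that is precisely the quantity that controls the geometric mean of the probability ratio in Lemma~14 of \cite{Yong.Zhu.ea.SICON16}. Replacing your pathwise cancellation with that geometric-mean statement---which is the paper's single uniform argument covering both of your branches---closes the gap and also makes precise the ``on average'' sense in which the two modes remain tied.
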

\begin{proof}
Since the Kullback Leibler divergence $D(f^\ast_\ell \| f^q_\ell)$ is greater than or equal to 0 with equality if and only if $f^\ast_\ell=f^q_\ell$ (\hspace{-0.01cm}\cite[Lemma 3.1]{Kullback.1951}), with $j=\ast \in \mathcal{Q}$ as the true model and $i \in \mathcal{Q}, i \neq \ast$, the summand in the exponent of the ratio of geometric means whose expression is given in \cite[Lemma 14]{Yong.Zhu.ea.SICON16} 
is always non-negative, i.e., $D(f^\ast_\ell \| f^i_\ell)-D(f^\ast_\ell \| f^\ast_\ell)=D(f^\ast_\ell \| f^i_\ell) \geq 0$. 
In other words, the ratio of the true model mean probability to the model mean probabilities of any other mode ($i\in \mathcal{Q}, i \neq \ast$) cannot decrease and can at best remain the same as the ratio of their priors which is 1 by assumption. Thus, either the true model is identified or both modes are indistinguishable and an alarm can be raised for attack detection. 
\end{proof}

\subsection{Attack Identification}

On the other hand, even when a combined switching and false data injection attack is detectable, it may not be identifiable. In order to identify an attack strategy and action, the mean consistency property of our estimation algorithm is a sufficient condition, which follows directly by Definition \ref{def:identification}.


\begin{theorem}[Attack Identification] Suppose mean consistency, i.e., \cite[Theorem 8]{Yong.Zhu.ea.SICON16} 
holds (and hence \cite[Corollary 13]{Yong.Zhu.ea.SICON16} also holds). Then, the switching and data injection attack strategy can be identified using the resilient state estimation algorithm in Algorithm \ref{algorithm2}.
\end{theorem}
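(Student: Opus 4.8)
The plan is to show that each of the three requirements in Definition~\ref{def:identification} for an \emph{identified} attack strategy is delivered directly by the assumed mean consistency property together with its consequence, asymptotic optimality. Recall that Definition~\ref{def:identification} asks for (a) the attack to be detected in the sense of Definition~\ref{def:detection}, (b) the true mode $\ast \in \mathcal{Q}$ to be uniquely determined on average, which in turn pins down the \emph{mode attack} and the \emph{signal attack location}, and (c) asymptotically unbiased estimates of the attack signal $d_k$, i.e., reliable recovery of the \emph{signal magnitude attack}.

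First I would invoke mean consistency \cite[Theorem 8]{Yong.Zhu.ea.SICON16}: the geometric mean of the posterior probability $\mu^\ast_k$ of the true model converges to $1$ for every choice of initial mode probabilities. Since the mode probabilities are normalized (cf. \eqref{eq:mu_j}), this forces the geometric-mean probabilities of every competing mode $q \neq \ast$ to vanish, so that on average $\hat{q}_k = \arg\max_{j} \mu^j_k = \ast$ and this maximizer is unique. This immediately yields requirement (b)---the true mode, and hence the attacker's choice of mode attack and signal attack location, is determined---and a fortiori requirement (a), since having maximal mean probability for $\ast$ is exactly attack detection by Definition~\ref{def:detection} (consistent with Theorem~\ref{thm:Detguarantee}).

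Next I would appeal to asymptotic optimality \cite[Corollary 13]{Yong.Zhu.ea.SICON16}, which by hypothesis follows from mean consistency: the combined state and input estimates in \eqref{eq:outputcombi} converge on average to the minimum-variance unbiased estimates produced by the mode-matched filter of \cite{Yong.Zhu.ea.Automatica16} for the true mode. In particular $\mathbb{E}[\hat{d}_{k-1} - d_{k-1}] \to 0$ as $k \to \infty$, which is precisely requirement (c). With (a)--(c) all established, Definition~\ref{def:identification} is satisfied and the attack strategy is identified.

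The result is thus essentially a bookkeeping corollary that chains Definitions~\ref{def:detection}--\ref{def:identification} with the two asymptotic properties, so there is no heavy computation; the only points needing care are (i) confirming that the geometric-mean / ``on average'' mode of convergence used in \cite[Theorem 8 and Corollary 13]{Yong.Zhu.ea.SICON16} is compatible with the ``on average'' language of Definitions~\ref{def:detection}--\ref{def:identification}, and (ii) checking that the heuristic lower bound $\epsilon$ imposed on the mode probabilities in Algorithm~\ref{algorithm2} does not interfere---in the limit it only lower-bounds the vanishing competitors and cannot prevent $\mu^\ast_k$ from dominating. It is worth remarking that this is exactly the regime complementary to the unidentifiable-but-detectable case: if two modes were statistically indistinguishable the Kullback--Leibler separation driving \cite[Theorem 8]{Yong.Zhu.ea.SICON16} would be zero and mean consistency would fail, so the hypothesis of this theorem is precisely what rules out that obstruction.
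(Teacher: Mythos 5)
Your proposal is correct and follows exactly the route the paper intends: the paper gives no explicit proof for this theorem, stating only that it ``follows directly by Definition~\ref{def:identification},'' and your write-up is precisely the unpacking of that claim---mean consistency gives unique determination of the true mode on average (hence detection and identification of the mode/signal-location attack), and asymptotic optimality gives $\mathbb{E}[\hat{d}_{k-1}-d_{k-1}]\to 0$ for the signal-magnitude attack. Your two cautionary remarks (the compatibility of the ``on average'' notions and the harmlessness of the $\epsilon$ floor in the limit) are sensible additions but not points the paper itself addresses.
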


On the other hand, if the estimator is not mean consistent but the true mode is in the set of models, then 
there must exist some models with generalized innovations that have identical probability distributions as the generalized innovation of the true model (since their KL-divergences are identically zero), and that are hence Gaussian white sequences \cite{Yong.Zhu.ea.SICON16}. 
In other words, in order to remain unidentifiable for some mode $q \in \mathcal{Q}$, an attacker seeks to choose another `true' mode  $* \in \mathcal{Q}, * \neq q$ and the attack signal $d_k$ such that the distributions of their generalized innovations $\nu_k^*$ and $\nu_k^{q|*}$ are identical, i.e., Gaussian white sequences with $\mathbb{E}[\nu_k^{q|*}]=\mathbb{E}[\nu_k^*]=0$ and $\mathbb{E}[\nu_k^{q|*} \nu_k^{{q|*} \top}]=\mathbb{E}[\nu_k^*\nu_k^{* \top}]=S_k^* \triangleq \tilde{\Gamma}_k^* \tilde{R}_{2,k}^{*,\star} \tilde{\Gamma}_k^{* \top} $ for all $k$.
Using this observation, we now investigate some conditions under which attackers can be unidentifiable, as well as some other conditions under which the defenders/system operators can guarantee that the attacks are identifiable.

\subsubsection{A sufficient condition for unidentifiable attacks}
Given that mean consistency guarantees that an attack is identifiable, the goal of an attacker would be to ensure that mean consistency does not hold by a strategic choice of data injection signals $d_k$ and the true mode $* \in \mathcal{Q}$ in order to mislead the system operator/defender into believing that the mode of operation is $q \in \mathcal{Q}, q \neq *$. The following is a sufficient condition for an attacker to synthesize an unidentifiable switching and data injection attack.


%
%


\begin{theorem}[Unidentifiable Attack] \label{thm:attacker}
Suppose $\tilde{\Gamma}^q_k T_{2,k}^q H^*_k$ has linearly independent rows and there exists $* \neq q \in \mathcal{Q}$ such that
\begin{align} \label{eq:stochasticD}
\mathcal{D}^s_k \triangleq (\tilde{\Gamma}^q_k T_{2,k}^q H^*_k)^\dagger (S^*_k-\tilde{\Gamma}^q_k T_{2,k}^q (\mathbb{E}[\mu^{q|*}_k \mu^{q|* \top}_k]+R_k)(\tilde{\Gamma}^q_k T_{2,k}^q)^\top))(\tilde{\Gamma}^q_k T_{2,k}^q H^*_k)^{\dagger \top} 
\end{align}
is positive definite ($\succeq 0$) for all $k$. Moreover, we assume that $\mu_0^*=\mu_0^q$. Then, the attack is unidentifiable if the attacker chooses this mode $* \neq q$ as well as the attack signal $d_k$ as a Gaussian sequence
\begin{align} \label{eq:d_attack}
d_k \sim \mathcal{N} (d_k^d, \mathcal{D}^s_k), \quad \forall k
\end{align}
with $\mathcal{D}^s_k$ defined in \eqref{eq:stochasticD} and $d_k^d$ is given by
\begin{align} \label{eq:d_det}
\begin{array}{rl}
d^d_k \triangleq \mathbb{E}[d_k] &= -(\tilde{\Gamma}^q_k T_{2,k}^q H^*_k)^\dagger \tilde{\Gamma}^q_k T_{2,k}^q (C_k^* \mathbb{E}[x_k] - C_k^q \hat{x}_{k|k}^{\star,q}+(D_k^*-D_k^q)\mathbb{E}[u_k])\\
&= -(\tilde{\Gamma}^q_k T_{2,k}^q H^*_k)^\dagger \tilde{\Gamma}^q_k T_{2,k}^q (C_k^* \hat{x}_{k|k}^* - C_k^q \hat{x}_{k|k}^{\star,q}+(D_k^*-D_k^q)\mathbb{E}[u_k]), \quad \forall k.
\end{array}
\end{align}
\end{theorem}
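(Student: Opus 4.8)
The plan is to run the mode-$q$ mode-matched filter of Algorithm~\ref{algorithm1} on the measurements generated by the true mode $*$ under the proposed attack, compute its generalized innovation $\nu^{q|*}_k$, and show that $\nu^{q|*}_k$ is a zero-mean Gaussian white sequence with covariance $S^*_k$, hence distributionally indistinguishable from the true-mode innovation $\nu^*_k$. Once this is done, $D(f^*_k\|f^q_k)=0$ by \cite[Lemma~3.1]{Kullback.1951}, so, arguing as in the proof of Theorem~\ref{thm:Detguarantee} and using $\mu^*_0=\mu^q_0$, the ratio of the mean mode probabilities of $*$ and $q$ stays identically $1$: mean consistency fails and, by Definition~\ref{def:identification}, the attack is unidentifiable.

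First I would substitute $y_k=C^*_kx_k+D^*_ku_k+H^*_kd_k+v_k$ and $z^q_{2,k}=T^q_{2,k}y_k$ (system transformation of Appendix~\ref{sec:appendix}) into $\nu^{q|*}_k=\tilde\Gamma^q_k(z^q_{2,k}-C^q_{2,k}\hat x^{\star,q}_{k|k}-D^q_{2,k}u_k)$, obtaining
\[
\nu^{q|*}_k=\tilde\Gamma^q_kT^q_{2,k}H^*_kd_k+\tilde\Gamma^q_kT^q_{2,k}\bigl(C^*_kx_k-C^q_k\hat x^{\star,q}_{k|k}+(D^*_k-D^q_k)u_k\bigr)+\tilde\Gamma^q_kT^q_{2,k}v_k .
\]
Note that $T^q_{2,k}H^q_k=0$ by construction of the transformation, but $T^q_{2,k}H^*_k\neq0$ in general, which is exactly the channel through which the attacker shapes $\nu^{q|*}_k$. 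Taking expectations, and using that the matched mode-$*$ filter is unbiased so $\mathbb{E}[\hat x^*_{k|k}]=\mathbb{E}[x_k]$ (\cite[Corollary~13]{Yong.Zhu.ea.SICON16}), the requirement $\mathbb{E}[\nu^{q|*}_k]=0=\mathbb{E}[\nu^*_k]$ becomes a linear equation for $\mathbb{E}[d_k]$ whose coefficient matrix $\tilde\Gamma^q_kT^q_{2,k}H^*_k$ has linearly independent rows; the equation is therefore consistent and its minimum-norm solution is exactly $d^d_k$ of \eqref{eq:d_det}, the two displayed expressions for $d^d_k$ agreeing by unbiasedness of the mode-$*$ filter.

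Next I would write $d_k=d^d_k+e_k$ with $e_k\sim\mathcal N(0,\mathcal{D}^s_k)$ a fresh Gaussian sequence independent of $\{w_\ell\}$, $\{v_\ell\}$ and of the filter's past (admissible under the attacker model, which only forbids correlation of $d_k$ with the noises). Then $\nu^{q|*}_k$ decomposes into the attack term $\tilde\Gamma^q_kT^q_{2,k}H^*_ke_k$ and a remainder built from $\mu^{q|*}_k$ (the zero-mean part of $C^*_kx_k-C^q_k\hat x^{\star,q}_{k|k}+(D^*_k-D^q_k)u_k$) together with the measurement noise, so that
\[
\mathbb{E}[\nu^{q|*}_k\nu^{q|*\top}_k]=(\tilde\Gamma^q_kT^q_{2,k}H^*_k)\mathcal{D}^s_k(\tilde\Gamma^q_kT^q_{2,k}H^*_k)^\top+\tilde\Gamma^q_kT^q_{2,k}(\mathbb{E}[\mu^{q|*}_k\mu^{q|*\top}_k]+R_k)(\tilde\Gamma^q_kT^q_{2,k})^\top .
\]
Equating this to $S^*_k$ and pre/post-multiplying by $(\tilde\Gamma^q_kT^q_{2,k}H^*_k)^\dagger$, which satisfies $(\tilde\Gamma^q_kT^q_{2,k}H^*_k)(\tilde\Gamma^q_kT^q_{2,k}H^*_k)^\dagger=I$ by full row rank, isolates $\mathcal{D}^s_k$ as in \eqref{eq:stochasticD}; the hypothesis that \eqref{eq:stochasticD} is positive semidefinite is precisely what makes $\mathcal{D}^s_k$ an admissible covariance, so the attack \eqref{eq:d_attack} is well posed. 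Finally, $\nu^{q|*}_k$ is an affine image of jointly Gaussian variables, hence Gaussian; its non-attack part is white by the generalized-innovation whiteness property \cite[Theorem~1]{Yong.Zhu.ea.SICON16} applied to the mode-$q$ filter, and $\tilde\Gamma^q_kT^q_{2,k}H^*_ke_k$ is white and independent across time of the filter's past by \eqref{eq:d_attack}, so $\nu^{q|*}_k$ is white; combined with the matched mean and covariance, $\nu^{q|*}_k$ and $\nu^*_k$ have identical distributions and the claim follows.

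The step I expect to be the main obstacle is the covariance matching: because $\hat x^{\star,q}_{k|k}$ depends on $z^q_{2,k}$, hence on both $v_k$ and $d_k$ itself, one has to define $\mu^{q|*}_k$ and the residual noise term so that all cross-correlations are absorbed exactly into the grouping $\mathbb{E}[\mu^{q|*}_k\mu^{q|*\top}_k]+R_k$ of \eqref{eq:stochasticD}, and one has to verify that the (implicit, data-dependent) definition of $d^d_k$ in \eqref{eq:d_det} is consistent with treating $e_k=d_k-d^d_k$ as independent of the filter output. Establishing whiteness of the mismatched innovation $\nu^{q|*}_k$ --- as opposed to the matched one directly covered by \cite[Theorem~1]{Yong.Zhu.ea.SICON16} --- is the other delicate point, and is where the Gaussian white (rather than merely second-moment matched) choice of $d_k$ in \eqref{eq:d_attack} is essential.
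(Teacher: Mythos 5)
Your proposal follows essentially the same route as the paper's proof: you expand the mode-$q$ generalized innovation $\nu^{q|*}_k$ under the true mode $*$, match its mean to zero via the choice of $d^d_k$ in \eqref{eq:d_det} and its covariance to $S^*_k$ via the choice of $\mathcal{D}^s_k$ in \eqref{eq:stochasticD}, and then conclude from the vanishing KL divergence and $\mu^*_0=\mu^q_0$ that the ratio of geometric-mean mode probabilities stays at $1$, so the attack is unidentifiable. The cross-correlation and whiteness subtleties you flag at the end are real, but the paper's own proof absorbs them into the same grouping $\mathbb{E}[\mu^{q|*}_k\mu^{q|*\top}_k]+R_k$ without further elaboration, so your argument is, if anything, slightly more careful than the original.
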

\begin{proof}
First, we compute what the generalized innovation for $q \in \mathcal{Q}$ would be when the attacker chooses $*$ as the true mode:
\begin{align} \label{eq:nu_qstar}
\begin{array}{rl}
\nu^{q|*}_k&= \tilde{\Gamma}^q_k T_{2,k}^q ( y_k - C_k^q \hat{x}_{k|k}^{\star,q} -D_k^q u_k)
=  \tilde{\Gamma}^q_k T_{2,k}^q(\mu^{q|*}_k+H^*_k (d_k-\mathbb{E}[d_k])+v_k),
\end{array}
\end{align}
where $\mu^{q|*}_k\triangleq C_k^* x_k - C_k^q \hat{x}_{k|k}^{\star,q}+(D_k^*-D_k^q)u_k + H^*_k d^d_k$, whose first and second moments, $\mathbb{E}[\mu^{q|*}_k]$ and $\mathbb{E}[\mu^{q|*}_k \mu^{q|* \top}_k]$, are assumed to be known to the attacker, while $T_{2,k}^q$ is the transformation matrix for mode $q$ as described in detail in \cite{Yong.Zhu.ea.Automatica16}.

Substituting \eqref{eq:d_attack} into \eqref{eq:nu_qstar} and computing its first and second moments using \eqref{eq:stochasticD} and \eqref{eq:d_det}, we obtain
\begin{align*}
\mathbb{E}[\nu^{q|*}_k] &=  \tilde{\Gamma}^q_k T_{2,k}^q\mathbb{E}[\mu^{q|*}_k]=0,\\
\mathbb{E}[\nu^{q|*}_k \nu^{q|* \top}_k] &= \tilde{\Gamma}^q_k T_{2,k}^q ( \mathbb{E}[\mu^{q|*}_k \mu^{q|* \top}_k] +H^*_k\mathcal{D}^s_k H^{*\top}_k + R_k) (\tilde{\Gamma}^q_k T_{2,k}^q)^\top = S^*_k.
\end{align*}
Since we assumed that $\mu_0^*=\mu_0^q$, 
we observe that the ratio of the geometric means of model probabilities given in \cite[Lemma 14]{Yong.Zhu.ea.SICON16} equals 1. In other words, the attacked system cannot be distinguished from one under normal operation, i.e., the attack is unidentifiable by Definition \ref{def:identification}.
\end{proof}

From the above theorem, we observe that the unidentifiable attack strategy relies on two factors. First, the system has vulnerabilities that can be exploited, if the sufficient conditions of the theorem are allowed to hold. Thus, as a system designer, these conditions serve as a guide for securing the system. Secondly, the attacker needs computational capability and system knowledge that are comparable to that of the system operator/defender.

\subsubsection{A sufficient condition for resilient state estimation}

From the perspective of the system designer/operator/defender, the main objective of resilient state estimation is to obtain unbiased state estimates in order to preserve the integrity and functionality of the system despite attacks. Attack identification is a secondary goal and is only optional. 

First and foremost, the system vulnerabilities need to be eliminated. Thus, the system needs to be strongly detectable for all modes $q \in \mathcal{Q}$, as discussed in Section \ref{sec:sysDes}. Next, since Theorem \ref{thm:attacker} presents yet another system vulnerability, a sufficient condition is needed such that this theorem does not hold.

\begin{lemma}
Theorem \ref{thm:attacker} does not hold if $H^q_k = H_k$ for all $q \in \mathcal{Q}$.
\end{lemma}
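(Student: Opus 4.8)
The plan is to show that, when $H^q_k=H_k$ for every $q\in\mathcal{Q}$, the first hypothesis of Theorem~\ref{thm:attacker}---that $\tilde{\Gamma}^q_k T_{2,k}^q H^*_k$ has linearly independent rows---can never be satisfied, so that the sufficient conditions in Theorem~\ref{thm:attacker} are vacuous and no unidentifiable attack of the form \eqref{eq:d_attack} and \eqref{eq:d_det} is produced by it. The structural fact I would invoke is that, in each mode $q$, the output transformation $T_{2,k}^q$ used by the mode-matched filter is by construction the one that annihilates that mode's direct feedthrough, i.e., $T_{2,k}^q H^q_k=0$; this is precisely what renders the transformed output $z_{2,k}^q$, and hence the generalized innovation $\nu^q_k$ in \eqref{eq:g-inno}, free of the unknown input (the ``attack-free residual'' interpretation noted just after \eqref{eq:g-inno}), and it is part of the filter derivation and output transformation of \cite{Yong.Zhu.ea.Automatica16} reviewed in Appendix~\ref{sec:appendix}.

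Given this fact the argument is immediate. Fix any $q\in\mathcal{Q}$ and any candidate true mode $*\in\mathcal{Q}$ with $*\neq q$. Under the hypothesis $H^*_k=H^q_k=H_k$ we have $T_{2,k}^q H^*_k=T_{2,k}^q H^q_k=0$, hence $\tilde{\Gamma}^q_k T_{2,k}^q H^*_k=0$ for all $k$, since $\tilde{\Gamma}^q_k$ is merely a full-row-rank shaping matrix applied after $T_{2,k}^q$. A matrix that is identically zero has linearly independent rows only if it has no rows at all; but $\tilde{\Gamma}^q_k T_{2,k}^q H^*_k$ has $p^q_{\tilde{R}}\geq 1$ rows whenever the generalized innovation of mode $q$ is non-empty, which is exactly the operating regime of the multiple-model estimator when $\mathfrak{N}>1$ (it requires $p<l$, per the remark following Theorem~\ref{thm:maxModels}). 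Thus its rows are linearly \emph{dependent}, the first hypothesis of Theorem~\ref{thm:attacker} fails for every pair $(*,q)$, and---because that hypothesis is exactly what lets the Moore--Penrose inverse in \eqref{eq:stochasticD} and \eqref{eq:d_det} act as a right inverse and thereby match both the mean and the covariance of $\nu^{q|*}_k$ to those of $\nu^*_k$---no pair $(*,q)$ and Gaussian attack sequence $\{d_k\}$ can meet the premises of Theorem~\ref{thm:attacker}. Consequently Theorem~\ref{thm:attacker} does not hold under $H^q_k=H_k$; equivalently, using a common sensor feedthrough across all modes eliminates this vulnerability.

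I expect the only delicate point to be the appeal to the identity $T_{2,k}^q H^q_k=0$: it is not re-derived here, so I would state it explicitly as the defining property of the $z_{1,k}/z_{2,k}$ splitting in \cite{Yong.Zhu.ea.Automatica16} and simply observe that left-multiplication by $\tilde{\Gamma}^q_k$ preserves the zero product. A second, minor item is the degenerate case $p=l$, in which the generalized innovation of some mode is empty: there the multiple-model mechanism and the mode-probability computation it relies on are themselves inapplicable, as already noted after Theorem~\ref{thm:maxModels}, so the claim is meaningful only in---and is trivially consistent with---the regime $p<l$.
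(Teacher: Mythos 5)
Your proof is correct and follows essentially the same route as the paper's: both reduce to the observation that $H^*_k=H^q_k$ forces $T_{2,k}^q H^*_k=0$ via the annihilation property $T_{2,k}H_k=0$ built into the output transformation, so $\tilde{\Gamma}^q_k T_{2,k}^q H^*_k=0$ cannot have linearly independent rows. Your added remarks on the nonzero row count and the degenerate case $p=l$ are fine but not needed.
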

\begin{proof}
Since $H^q_k = H_k$ for all $q \in \mathcal{Q}$, $T_{2,k}^q=T_{2,k}^*$ and thus, $T_{2,k}^q H_k^*=T_{2,k}^*H_k^*=0$ and $\tilde{\Gamma}^q_k T_{2,k}^q H^*_k=0$. Hence, $\tilde{\Gamma}^q_k T_{2,k}^q H^*_k=0$ cannot have linearly independent rows, as assumed in Theorem \ref{thm:attacker}.
\end{proof}

In addition to requiring $H^q_k = H_k$ for all $q \in \mathcal{Q}$, without loss of generality and for simplicity, we also assume that $D_k^q=D_k^*$ for all $q \in \mathcal{Q}$. Since Theorem \ref{thm:attacker} presents only  sufficient conditions, a strategic attacker may somehow still be able to make the  distributions of the generalized innovations $\nu_k^*$ and $\nu_k^{q|*}$ identical. Thus, even in this case where $H^q_k = H_k$  and $D^q_k = D_k$ for all $q \in \mathcal{Q}$, it is interesting to investigate further sufficient conditions for the system defender to ensure that the main objective of resilient state estimation is still achieved.

\begin{theorem}[Resilience Guarantee] \label{thm:guarantee}
Suppose $H^q_k = H_k$  and $D^q_k = D_k$ for all $q \in \mathcal{Q}$. Moreover, for all $q,q' \in \mathcal{Q}$, if there exists $T$ such that for all $k \geq T$ and the following holds
\begin{enumerate}[(i)]
\item $\rm{rank} \begin{bmatrix} \tilde{\Gamma}^{q}_k T_{2,k}^q C^{q'}_{k} & \tilde{\Gamma}^q_k T_{2,k}^q C_{k}^q \end{bmatrix}=2 n$, if $C_k^{q} \neq C_k^{q'}$,
\item $\rm{rank}(\tilde{\Gamma}^{q}_k T_{2,k}^qC^{q'}_{k}) =\rm{rank}(\tilde{\Gamma}^q_k T_{2,k}^qC^{q}_{k} )=n$, if $C_k^{q} = C_k^{q'}$,
\end{enumerate}
then the state estimates obtained using Algorithm \ref{algorithm2} are guaranteed to be resilient (i.e., asymptotically unbiased).
\end{theorem}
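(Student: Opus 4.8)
The plan is to split on whether the estimator is mean consistent, and — in the failure case — to show that every mode that can asymptotically survive the likelihood competition against the true mode still produces an asymptotically unbiased state estimate, so that the $\arg\max$ output of Algorithm~\ref{algorithm2} (cf.\ \eqref{eq:outputcombi}) is resilient no matter which mode it selects.

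If mean consistency (\cite[Theorem 8]{Yong.Zhu.ea.SICON16}) holds, then \cite[Corollary 13]{Yong.Zhu.ea.SICON16} applies and the estimate converges on average to the minimum-variance unbiased estimate for the true mode $\ast$, which is asymptotically unbiased by the filter properties of \cite{Yong.Zhu.ea.Automatica16} under strong detectability of mode $\ast$; this case is immediate. Otherwise mean consistency fails, and I would argue exactly as in the proof of Theorem~\ref{thm:Detguarantee}: by nonnegativity of the Kullback--Leibler divergence and the ratio-of-geometric-means expression of \cite[Lemma 14]{Yong.Zhu.ea.SICON16}, the (geometric-mean) ratio $\mu^\ast_k/\mu^q_k$ is non-increasing for every $q\neq\ast$, so for large $k$ the mode returned by the $\arg\max$ lies in the set $\mathcal{S}$ of modes $q$ whose accumulated divergence $\sum_\ell D(f^\ast_\ell\|f^q_\ell)$ stays bounded; for such $q$ the generalized innovation $\nu^{q|\ast}_k$ is (asymptotically) a zero-mean Gaussian white sequence with covariance $S^\ast_k$, in particular $\mathbb{E}[\nu^{q|\ast}_k]=0$ for all sufficiently large $k$. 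It then suffices to show that for every $q\in\mathcal{S}$ the mode-$q$ estimate $\hat{x}^q_{k|k}$ is asymptotically unbiased.

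Fix $q\in\mathcal{S}$. Because $H^q_k=H_k$ we have $T_{2,k}^q H^*_k=T_{2,k}^* H^*_k=0$, and because $D^q_k=D_k$ the $u_k$-terms cancel, so substituting $y_k=C^*_k x_k+D^*_k u_k+H^*_k d_k+v_k$ into $\nu^{q|\ast}_k=\tilde{\Gamma}^q_k T_{2,k}^q(y_k-C^q_k\hat{x}^{\star,q}_{k|k}-D^q_k u_k)$ (cf.\ \eqref{eq:g-inno}) gives $\nu^{q|\ast}_k=\tilde{\Gamma}^q_k T_{2,k}^q\bigl(C^*_k x_k-C^q_k\hat{x}^{\star,q}_{k|k}+v_k\bigr)$, where $\hat{x}^{\star,q}_{k|k}$ comes from \eqref{eq:xstar}. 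Taking expectations and using $\mathbb{E}[\nu^{q|\ast}_k]=0$ yields, for all large $k$, $\tilde{\Gamma}^q_k T_{2,k}^q C^*_k\,\mathbb{E}[x_k]=\tilde{\Gamma}^q_k T_{2,k}^q C^q_k\,\mathbb{E}[\hat{x}^{\star,q}_{k|k}]$. If $C^q_k=C^*_k$ this reads $\tilde{\Gamma}^q_k T_{2,k}^q C^q_k\,\mathbb{E}[\hat{x}^{\star,q}_{k|k}-x_k]=0$, and condition~(ii), $\rm{rank}(\tilde{\Gamma}^q_k T_{2,k}^q C^q_k)=n$ (full column rank), forces $\mathbb{E}[\hat{x}^{\star,q}_{k|k}-x_k]=0$; if $C^q_k\neq C^*_k$ the identity is $\bigl[\,\tilde{\Gamma}^q_k T_{2,k}^q C^*_k\;\; -\tilde{\Gamma}^q_k T_{2,k}^q C^q_k\,\bigr]\begin{bmatrix}\mathbb{E}[x_k]\\ \mathbb{E}[\hat{x}^{\star,q}_{k|k}]\end{bmatrix}=0$, and condition~(i) (the stacked matrix has rank $2n$, i.e.\ full column rank) forces $\mathbb{E}[x_k]=0$ and $\mathbb{E}[\hat{x}^{\star,q}_{k|k}]=0$, again giving $\mathbb{E}[\hat{x}^{\star,q}_{k|k}-x_k]=0$. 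Finally I would push this through the measurement update \eqref{eq:stateEst}: since by construction the residual satisfies $z_{2,k}-C_{2,k}\hat{x}^{\star,q}_{k|k}-D_{2,k}u_k=-C_{2,k}(\hat{x}^{\star,q}_{k|k}-x_k)+v_{2,k}$ (the $d_k$-term annihilated, cf.\ \cite{Yong.Zhu.ea.Automatica16}), we get $\mathbb{E}[\hat{x}^q_{k|k}-x_k]=(I-\tilde{L}^q_k C^q_{2,k})\,\mathbb{E}[\hat{x}^{\star,q}_{k|k}-x_k]=0$. Hence every mode in $\mathcal{S}$, and the true mode $\ast$ as well, yields an asymptotically unbiased estimate, so the output of Algorithm~\ref{algorithm2} is resilient.

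The step I expect to be the main obstacle is the second one: rigorously extracting from the failure of mean consistency that the surviving modes actually have $\mathbb{E}[\nu^{q|\ast}_k]=0$ for all large $k$ — not merely innovation distributions ``close'' to that of $\ast$. This requires leaning carefully on the geometric-mean/KL machinery of \cite[Lemma 14]{Yong.Zhu.ea.SICON16} and on the finiteness of $\mathcal{Q}$ to conclude that a bounded accumulated divergence forces the per-step divergences (hence the per-step mean mismatch) to vanish identically from some time on, and on confirming that distinguishable modes are eventually removed from the $\arg\max$ despite the heuristic probability floor $\epsilon$ in Algorithm~\ref{algorithm2}. The rank computations and the measurement-update propagation are then routine once the transformations $T_{2,k}^q$, $\tilde{\Gamma}^q_k$ (and the identities $T_{2,k}^q H^q_k=0$, invertibility of $S^q_k$) are imported from \cite{Yong.Zhu.ea.Automatica16}.
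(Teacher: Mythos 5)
Your proposal is correct and follows essentially the same route as the paper's proof: the rank conditions in cases (i) and (ii) are used to show that $\mathbb{E}[\nu^{q|*}_k]=0$ can only occur when the mode-$q$ estimate is already unbiased, so any mode surviving the likelihood/KL-divergence competition yields a resilient estimate. The paper's own argument is in fact terser than yours --- it states the same two-case dichotomy without spelling out the surviving-mode set or the propagation through the measurement update, and it is equally informal about the step you flag as the main obstacle (passing from the failure of mean consistency to exactly zero-mean generalized innovations for the selected mode).
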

\begin{proof}
These sufficient conditions are derived by making sure that $\mathbb{E}[\nu_k^{q|*}] \neq \mathbb{E}[\nu_k^*]=0$ such that \cite[Theorem 8]{Yong.Zhu.ea.SICON16} 
does not hold. First, by the assumptions of this theorem,  \eqref{eq:nu_qstar} simplifies to 
\begin{align} 
\begin{array}{rl}
\nu^{q|*}_k&= \tilde{\Gamma}^q_k T_{2,k}^q ( C_k^* x_k - C_k^q \hat{x}_{k|k}^{\star,q} +v_k).
\end{array}
\end{align}

In Case (i), i.e., when $C_k^{q} \neq C_k^{*}$, we have $\mathbb{E}[\nu^{q|*}_k]=\begin{bmatrix} \tilde{\Gamma}^{q}_k T_{2,k}^q C^{*}_{k} & \tilde{\Gamma}^q_k T_{2,k}^q C_{k}^q \end{bmatrix} \begin{bmatrix} \mathbb{E}[{x}_{k}] \\ -\mathbb{E}[\hat{x}_{k|k}^{\star,q}] \end{bmatrix}$. Hence, since the rank condition holds, $\mathbb{E}[\nu^{q|*}_k] \neq 0$ unless $\begin{bmatrix} \mathbb{E}[{x}_{k}] \\ -\mathbb{E}[\hat{x}_{k|k}^{\star,q}] \end{bmatrix}=0$, in which case we have an unbiased (and thus resilient) estimate $\mathbb{E}[\hat{x}_{k|k}^{\star,q}]=\mathbb{E}[{x}_{k}]=0$. 

In Case (ii), i.e., when $C_k^{q} = C_k^{*}$, we have $\mathbb{E}[\nu^{q|*}_k]= \tilde{\Gamma}^{q}_k T_{2,k}^q C^{q}_{k}  \mathbb{E}[{x}_{k}-\hat{x}_{k|k}^{\star,q}]$. By the rank assumption, $\mathbb{E}[\nu^{q|*}_k] \neq 0$ unless $\mathbb{E}[{x}_{k}-\hat{x}_{k|k}^{\star,q}]=0$, in which case we again have an unbiased (and thus resilient) estimate $\mathbb{E}[\hat{x}_{k|k}^{\star,q}]=\mathbb{E}[{x}_{k}]$.
\end{proof}

%
%
%

Note that Theorem \ref{thm:guarantee} only guarantees that the state estimates are unbiased, but the true mode cannot be uniquely distinguished and the attack signal cannot be estimated. Hence, the conditions in Theorem \ref{thm:guarantee} are not sufficient for attack identification.

\section{Attack Mitigation}\label{sec:mitigation}

We now turn to the next step beyond attack detection and identification, and investigate how we can mitigate the effects of attacks (Problem 1.\ref{prob4}). Specifically, we study the problem of rejecting/canceling data injection attacks assuming that the attack mode can be identified (thus, the superscript $q$ is omitted throughout this section), while using the resilient state estimates for feedback stabilization, in the sense of guaranteeing the boundedness of the expected states for bounded attack signals.
To this end, we consider a linear state feedback controller with attack/disturbance rejection terms, where the true state and unknown input are replaced by their estimated values: 
\begin{align} \label{eq:DTcontrol}
u_k=-K^c_k \hat{x}_{k|k} - J_{1,k} \hat{d}_{1,k} - J_{2,k} \hat{d}_{2,k-1},
\end{align} \noindent
where $K^c_k$ is the state feedback gain, while $J_{1,k}$ and $J_{2,k}$ are the attack/disturbance rejection gains. Note that we have used a delayed estimate of $d_{2,k-1}$ given in \eqref{eq:d}, which is the only estimate we can obtain in light of \cite[Equation (6)]{Yong.Zhu.ea.Automatica16}. 

\begin{theorem}[Attack-Mitigating and Stabilizing Controller] \label{thm:control} 
Suppose the system is controllable in the true mode $q \in \mathcal{Q}$ (known or detected), and the expected values of attack signals $\mathbb{E}[d_{2,k}]$ and their rates of variation $\|\mathbb{E}[d_{2,k}-d_{2,k-1}]\|$  are bounded for all $k$, i.e., $\|\mathbb{E}[d_{2,k}]\| \leq d^B_2$ and  $\|\mathbb{E}[d_{2,k}-d_{2,k-1}]\| \leq \delta d^B_2$ with $\min\{d^B_2,\delta d^B_2\} < \infty$. Then, a feedback controller that  mitigates the effects of data injection attacks and guarantees the boundedness of the expected system states is given by
\begin{align} \label{eq:controlexp}
u_k=-K^c_k \hat{x}_{k|k} - \begin{bmatrix} J_{1,k} & J_{2,k} \end{bmatrix} \tilde{J}_k^{-1} \begin{bmatrix} M_{1,k} (z_{1,k}-C_{1,k} \hat{x}_{k|k}) \\ M_{2,k} (z_{2,k} - C_{2,k} \hat{x}_{k|k-1}) \end{bmatrix},
\end{align}
where $K^c_k$ is any state feedback gain such that $A_k -B_k K^c_k$ is stable and the attack/disturbance rejection gain $J_{1,k}$ is chosen to minimize $ \gamma_1 \triangleq \| G_{1,k}-B_k J_{1,k}\|_2$, which can be solved with a semidefinite program\footnote{Semidefinite programs are convex optimization problems for which software packages, e.g. CVX \cite{cvx,Grant.08}, 
are available.} (with $i=1$) as follows:
\begin{align} \label{eq:sdp}
\begin{array}{ll}
{\rm minimize\quad  } &\gamma_i \\
{\rm subject \ to \ }  &\begin{bmatrix} \gamma_i I & G_{i,k}-B_k J_{i,k} \\  (G_{i,k}-B_k J_{i,k})^\top & \gamma_i I \end{bmatrix} \succeq 0, 
\end{array}
\end{align} 
while $J_{2,k}$ is chosen as 0 if $\delta d^B_2 >  d^B_2$, and otherwise, to  minimize $ \gamma_2 \triangleq \| G_{2,k}-B_k J_{2,k}\|_2$ by solving   the semidefinite program \eqref{eq:sdp} with $i=2$. It is assumed that $\tilde{J}_k \triangleq \begin{bmatrix} I-M_{1,k} D_{1,k} J_{1,k} & -M_{1,k} D_{1,k} J_{2,k} \\ -M_{2,k} D_{2,k} J_{1,k} & I-M_{2,k} D_{2,k} J_{2,k} \end{bmatrix}$ is invertible.
\end{theorem}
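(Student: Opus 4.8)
The plan is to proceed in three stages: (1) show that the explicit law \eqref{eq:controlexp} is the causally well-defined realization of the implicit attack-rejecting law \eqref{eq:DTcontrol}; (2) substitute it into the true plant dynamics and pass to expectations to obtain a closed-loop recursion for $\mathbb{E}[x_k]$; and (3) bound that recursion. For Stage~1, note that \eqref{eq:DTcontrol} is implicit because, by \eqref{eq:d}, the estimates $\hat d_{1,k}$ and $\hat d_{2,k-1}$ themselves contain the feedthrough terms $-M_{1,k}D_{1,k}u_k$ and $-M_{2,k}D_{2,k}u_k$. Writing $\hat d_{1,k}=M_{1,k}(z_{1,k}-C_{1,k}\hat x_{k|k})-M_{1,k}D_{1,k}u_k$ and $\hat d_{2,k-1}=M_{2,k}(z_{2,k}-C_{2,k}\hat x_{k|k-1})-M_{2,k}D_{2,k}u_k$ and substituting into \eqref{eq:DTcontrol}, the only $u_k$-dependence enters through these feedthrough terms, and collecting them produces a linear equation for $\begin{bmatrix}\hat d_{1,k}^\top & \hat d_{2,k-1}^\top\end{bmatrix}^\top$ with coefficient matrix exactly $\tilde J_k$; under the assumed invertibility of $\tilde J_k$ this is solved uniquely, and back-substitution into \eqref{eq:DTcontrol} gives \eqref{eq:controlexp}. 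It is worth recording here that, since $u_k$ is known to the filter, the estimation errors $x_k-\hat x_{k|k}$, $d_{1,k}-\hat d_{1,k}$ and $d_{2,k-1}-\hat d_{2,k-1}$ are independent of the control input (the feedthrough $D_{i,k}u_k$ cancels identically in \eqref{eq:d}); hence the asymptotic unbiasedness of the mode-matched filter for the true mode $q$ (Section~\ref{sec:ident}, inherited from \cite{Yong.Zhu.ea.Automatica16,Yong.Zhu.ea.SICON16}) is preserved in closed loop, i.e.\ a separation principle holds.

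For Stage~2, write the true dynamics in the filter coordinates (cf.\ \eqref{eq:xstar}) as $x_{k+1}=A_kx_k+B_ku_k+G_{1,k}d_{1,k}+G_{2,k}d_{2,k}+w_k$, substitute \eqref{eq:controlexp} (equivalently \eqref{eq:DTcontrol}), take expectations (the noises are zero-mean), and set $\mathbb{E}[\hat x_{k|k}]=\mathbb{E}[x_k]+b^x_k$, $\mathbb{E}[\hat d_{1,k}]=\mathbb{E}[d_{1,k}]+b^1_k$, $\mathbb{E}[\hat d_{2,k-1}]=\mathbb{E}[d_{2,k-1}]+b^2_k$, where $b^x_k,b^1_k,b^2_k\to 0$ by Stage~1. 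Using $\mathbb{E}[d_{2,k}]=\mathbb{E}[d_{2,k-1}]+\mathbb{E}[d_{2,k}-d_{2,k-1}]$, this gives
\begin{align*}
\mathbb{E}[x_{k+1}]={}&(A_k-B_kK^c_k)\mathbb{E}[x_k]+(G_{1,k}-B_kJ_{1,k})\mathbb{E}[d_{1,k}]\\
&+(G_{2,k}-B_kJ_{2,k})\mathbb{E}[d_{2,k-1}]+G_{2,k}\mathbb{E}[d_{2,k}-d_{2,k-1}]+r_k,
\end{align*}
with $r_k\triangleq -B_kK^c_kb^x_k-B_kJ_{1,k}b^1_k-B_kJ_{2,k}b^2_k\to 0$ (when $J_{2,k}=0$ the two $d_2$-terms collapse to $G_{2,k}\mathbb{E}[d_{2,k}]$).

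For Stage~3, controllability of the true mode guarantees a gain $K^c_k$ with $A_k-B_kK^c_k$ uniformly exponentially stable, so the recursion above is a uniformly exponentially stable LTV system driven by the exogenous signal $g_k$ formed by its last four terms; such a system is bounded-input bounded-state, so it suffices to bound $g_k$. The term $r_k$ is bounded since $r_k\to 0$; the gain $J_{1,k}$ minimizes $\gamma_1=\|G_{1,k}-B_kJ_{1,k}\|_2$ via \eqref{eq:sdp}, so $\|(G_{1,k}-B_kJ_{1,k})\mathbb{E}[d_{1,k}]\|\le \gamma_1\|\mathbb{E}[d_{1,k}]\|$ (and $\mathbb{E}[d_{1,k}]$ is part of the bounded attack signal, the delay-sensitive $d_{2,k}$-conditions being the only ones singled out in the statement). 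For the delayed term: if $\delta d^B_2>d^B_2$ we take $J_{2,k}=0$, giving the bound $\|G_{2,k}\|_2\,d^B_2<\infty$; if $\delta d^B_2\le d^B_2$ we minimize $\gamma_2=\|G_{2,k}-B_kJ_{2,k}\|_2$ by \eqref{eq:sdp}, giving $\gamma_2 d^B_2+\|G_{2,k}\|_2\,\delta d^B_2$; choosing whichever design yields the smaller bound is exactly the stated rule, and in both cases $g_k$ is finite under $\min\{d^B_2,\delta d^B_2\}<\infty$. Hence $\mathbb{E}[x_k]$ is bounded, while the attack-induced component of $\mathbb{E}[x_k]$ scales with $\gamma_1,\gamma_2$ rather than $\|G_{1,k}\|_2,\|G_{2,k}\|_2$ — the attack is mitigated.

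The main obstacle is Stage~2: one must verify that inserting the filter outputs into the plant genuinely reproduces the nominal $A_k-B_kK^c_k$ dynamics plus a vanishing-plus-bounded residual. This rests on (i) the control-independence of the estimation errors, so that the open-loop unbiasedness/optimality results of \cite{Yong.Zhu.ea.SICON16,Yong.Zhu.ea.Automatica16} transfer to the closed loop (the separation-principle step), and (ii) a careful treatment of the one-step-delayed estimate $\hat d_{2,k-1}$, whose mismatch with $d_{2,k}$ is precisely what introduces the $\delta d^B_2$ term and forces the dichotomy between the $J_{2,k}=0$ design and the $\gamma_2$-minimizing design.
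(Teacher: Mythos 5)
Your proposal is correct and follows essentially the same route as the paper: a separation argument (the paper's Lemma~\ref{lem:DTseparation} derives the block-triangular joint state/error dynamics \eqref{eq:closedloop}, whereas you argue equivalently via control-independence of the estimation errors), the identification of the driving terms $(G_{i,k}-B_kJ_{i,k})d_{i,k}$ and the delay mismatch $d_{2,k}-\hat d_{2,k-1}$ that motivates the $J_{2,k}$ dichotomy, and the inversion of $\tilde J_k$ to pass from the implicit law \eqref{eq:DTcontrol} to \eqref{eq:controlexp}. Your Stage~3 BIBO bounding of the expectation recursion is in fact spelled out more carefully than in the paper, which simply asserts boundedness after establishing stability of $A_k-B_kK^c_k$.
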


To prove Theorem \ref{thm:control}, we first show that there exists a separation principle for linear discrete-time stochastic systems with unknown inputs, i.e., when the true mode is known, which allows us to choose the state feedback gain $K^c_k$ and attack/disturbance rejection gains $J_{1,k}$ and $J_{2,k}$ independently. 
\begin{lemma}(Separation Principle) \label{lem:DTseparation}
The state feedback controller gain $K^c_k$ in \eqref{eq:DTcontrol} can be designed independently of the state and input estimator gains $L_k$, $M_{1,k}$ and $M_{2,k}$ in Algorithm \ref{algorithm1}, as well as the disturbance rejection gains $J_{1,k}$ and $J_{2,k}$.
\end{lemma}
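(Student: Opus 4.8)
The plan is to reduce the claim to the classical certainty-equivalence structure by showing that the filter's estimation-error recursion carries no dependence on the known input (hence none on the controller gains), while the closed-loop state recursion has its homogeneous part governed solely by $A_k - B_k K^c_k$. First I would define the estimation errors $e^x_k \triangleq x_k - \hat{x}_{k|k}$, $e^d_{1,k} \triangleq d_{1,k} - \hat{d}_{1,k}$ and $e^d_{2,k-1} \triangleq d_{2,k-1} - \hat{d}_{2,k-1}$ and argue, by inspecting {\sc Opt-Filter} (Algorithm~\ref{algorithm1}) together with the coordinate transformation of Appendix~\ref{sec:appendix}, that these errors evolve according to a recursion driven only by $x_0$, the noises $\{w_j\},\{v_j\}$ and the unknown inputs $\{d_j\}$. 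The point is that every occurrence of $u$ in Algorithm~\ref{algorithm1} appears in the combinations $z_{1,k}-C_{1,k}\hat{x}-D_{1,k}u_k$, $z_{2,k}-C_{2,k}\hat{x}-D_{2,k}u_k$ and in the prediction term $B_{k-1}u_{k-1}$; substituting the true transformed output and state equations, all $u$-terms cancel identically in the error equations. Consequently the covariance recursions and gains $M_{1,k},M_{2,k},\tilde{L}_k$ (equivalently $L_k$) of Algorithm~\ref{algorithm1} are fixed by the plant data $(A_k,B_k,C_k,D_k,G_k,H_k,Q_k,R_k)$ alone and never reference $K^c_k,J_{1,k},J_{2,k}$.

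I would then substitute the controller \eqref{eq:DTcontrol} into the plant in the true mode and replace $\hat{x}_{k|k}=x_k-e^x_k$, $\hat{d}_{1,k}=d_{1,k}-e^d_{1,k}$, $\hat{d}_{2,k-1}=d_{2,k-1}-e^d_{2,k-1}$, using the split $G_k d_k = G_{1,k}d_{1,k}+G_{2,k}d_{2,k}$ from \cite{Yong.Zhu.ea.Automatica16}, to obtain
\begin{align*}
x_{k+1}&=(A_k-B_kK^c_k)x_k + B_kK^c_k e^x_k + (G_{1,k}-B_kJ_{1,k})d_{1,k} + B_kJ_{1,k}e^d_{1,k}\\
&\quad{}+ (G_{2,k}-B_kJ_{2,k})d_{2,k} + B_kJ_{2,k}(d_{2,k}-d_{2,k-1}) + B_kJ_{2,k}e^d_{2,k-1} + w_k.
\end{align*}
The only matrix multiplying $x_k$ is $A_k-B_kK^c_k$, which depends on the design exclusively through $K^c_k$; all remaining terms are forcing terms whose expectations are controlled by the estimation errors (control-independent by the previous step) together with the rejection residuals $(G_{i,k}-B_kJ_{i,k})\mathbb{E}[d_{i,k}]$ and the rate term $B_kJ_{2,k}\mathbb{E}[d_{2,k}-d_{2,k-1}]$.

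I would then take expectations and invoke the filter stability/unbiasedness established in \cite{Yong.Zhu.ea.Automatica16} and recalled in Section~\ref{sec:ident}, so that $\mathbb{E}[e^x_k],\mathbb{E}[e^d_{1,k}],\mathbb{E}[e^d_{2,k-1}]$ vanish or stay bounded regardless of the controller; boundedness of $\mathbb{E}[x_k]$ is then governed by (i) stability of $A_k-B_kK^c_k$, which is achievable by controllability of the true mode and depends only on $(A_k,B_k)$, and (ii) boundedness of the forcing, influenced by $J_{1,k},J_{2,k}$ only through the norms $\gamma_i=\|G_{i,k}-B_kJ_{i,k}\|_2$ and the case distinction on whether $\delta d^B_2 > d^B_2$ (which decides whether $J_{2,k}=0$). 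Thus $K^c_k$, the estimator gains $L_k,M_{1,k},M_{2,k}$, and the rejection gains $J_{1,k},J_{2,k}$ fall into three disjoint design subproblems, which is precisely the asserted separation.

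The hard part is the first step: verifying rigorously that the two-stage filter of Algorithm~\ref{algorithm1} --- with its intermediate quantities $\hat{x}_{k|k-1},\hat{x}^\star_{k|k},\hat{x}_{k|k}$ and the one-step-delayed estimate $\hat{d}_{2,k-1}$ --- really has input-independent error dynamics, which requires carrying the cancellation of $u$-terms through the coordinate transformation of Appendix~\ref{sec:appendix}. I would also need to check that no algebraic loop arises when $u_k$ re-enters through $\hat{d}_{1,k}$ and $\hat{d}_{2,k-1}$; this is exactly where the invertibility of $\tilde{J}_k$ posited in Theorem~\ref{thm:control} is used, allowing $u_k$ to be solved explicitly as in \eqref{eq:controlexp} so that the closed loop, and hence the decomposition above, is well defined.
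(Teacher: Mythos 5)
Your proposal is correct and follows essentially the same route as the paper: the paper likewise substitutes the control law into the plant and filter equations, writes the joint recursion for $(x_k,\tilde{x}_{k|k})$, and observes that the resulting state matrix is block-triangular with the error block $(I-\tilde{L}_{k+1}C_{2,k})\overline{A}_k$ free of $K^c_k$, $J_{1,k}$, $J_{2,k}$, so the controller and estimator eigenvalues decouple. Your explicit cancellation of the $u$-terms in the error recursion and the remark that the algebraic loop in $\hat{d}_{1,k},\hat{d}_{2,k-1}$ is resolved by the invertibility of $\tilde{J}_k$ are exactly the (implicit) ingredients behind the paper's displayed closed-loop equation.
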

\begin{proof}
Using the control law \eqref{eq:DTcontrol} and the filter equations in \eqref{eq:d}, \eqref{eq:xstar} and \eqref{eq:stateEst}, it can be verified that the system states and estimator error dynamics are given by 
\begin{align} 
\nonumber \begin{bmatrix} {x}_{k+1} \\ {\tilde{x}}_{k+1|k+1} \end{bmatrix} &= \begin{bmatrix} A_k-B_k K^c_k & \begin{array}{c} B_k (K_k^c \hspace{-0.05cm}-\hspace{-0.05cm} J_{1,k} M_{1,k} C_{1,k} \hspace{-0.05cm} \\ - J_{2,k} M_{2,k} {C}_{2,k+1}(A_k \hspace{-0.05cm} + \hspace{-0.05cm} G_{1,k} M_{1,k} C_{1,k}))\end{array} \\ 0 & (I- \tilde{L}_{k+1} C_{2,k}) \overline{A}_k \end{bmatrix} \begin{bmatrix} {x}_k \\ {\tilde{x}_{k|k}} \end{bmatrix}\\
 &  + \begin{bmatrix} G_{1,k}-B_k J_{1,k} & G_{2,k}-B_k J_{2,k} \\ 0 & 0 \end{bmatrix} \begin{bmatrix} d_{1,k} \\ d_{2,k} \end{bmatrix} + \begin{bmatrix} B_k J_{2,k} \\ 0 \end{bmatrix} ({d}_{2,k}-\hat{d}_{2,k-1}) \label{eq:closedloop} \\
\nonumber &   + \yong{\begin{bmatrix}  \begin{array}{l} I -B_k J_{2,k} \\ \qquad  M_{2,k+1} C_{2,k+1}\end{array}    & \begin{array}{r} - B_k J_{1,k} M_{1,k} + B_k J_{2,k} \ \\ M_{2,k+1} {C}_{2,k+1} G_{1,k} M_{1,k}  \end{array}  &  -B_k J_{2,k} M_{2,k+1} \\ \begin{array}{l} (I-\tilde{L}_{k+1} C_{2,k+1})\\ (I-G_{2,k}M_{2,k+1}\\ \qquad C_{2,k+1}) \end{array} & \begin{array}{l}-(I-\tilde{L}_{k+1} C_{2,k+1})\\ (I-G_{2,k}M_{2,k+1} C_{2,k+1})\\ G_{1,k}M_{1,k}
 \end{array} & \begin{array}{l}-(I-\tilde{L}_{k+1} C_{2,k+1})\\
\ \ G_{2,k}M_{2,k+1}- \tilde{L}_{k+1} \end{array} \end{bmatrix}} \mathbf{w}_k,
\end{align}
where \yong{$\mathbf{w}_k\triangleq \begin{bmatrix} w_k^\top  & v_{1,k}^\top & v_{2,k+1}^\top \end{bmatrix}^\top$} has zero mean and $\overline{A}_k \triangleq (I-G_{2,k-1} M_{2,k} C_{2,k})(A_k-G_{1,k} M_{1,k} C_{1,k})$. 
Since the state matrix has a block diagonal structure, the eigenvalues of the controller and estimator are independent of each other.
\end{proof}

Armed with the above lemma, we now show how the state feedback and attack rejection gains can be independently chosen. 

\begin{proof}[of Theorem \ref{thm:control}]
By Lemma \ref{lem:DTseparation}, the state feedback gain, $K^c_k$, can be independently designed  with no effect on the stability of the resilient state estimator and independent of the choice of the disturbance rejection gains $J_{1,k}$ and $J_{2,k}$. In other words, $K^c_k$ can be chosen as any state feedback gain (e.g., with Linear Quadratic Regulator (LQR) or pole placement) such that $A_k - B_k K^c_k$ is stable, thus the expected system states is bounded since the expected values of the attack signals and their variation rates are bounded by assumption.

On the other hand, $J_1$ and $J_2$ are chosen such that the effect of injected attack signal $d_k$ on the closed loop system is minimized/reduced. Since $d_{1,k}$ affects the closed loop dynamics through the matrix $G_{1,k}-B_k J_{1,k}$, we choose $J_{1,k}$ such that the induced 2-norm of $G_{1,k}-B_k J_{1,k}$ is minimized, which can be obtained by the semidefinite program \eqref{eq:sdp} with $i=1$. Similarly, $J_{2,k}$ can be chosen to minimize the induced 2-norm of $G_{2,k}-B_k J_{2,k}$ using the semidefinite program \eqref{eq:sdp} with $i=2$.
However, 
we have an additional ${d}_{2,k}-\hat{d}_{2,k-1}$ in the closed loop dynamics \eqref{eq:closedloop}, which disappears if $J_{2,k}=0$. Thus, if we assume that $\mathbb{E}[d_{2,k}]$ is bounded for all $k$, i.e., $\|\mathbb{E}[d_{2,k}]\| \leq d^B_2$, and its rate of variation is  bounded, i.e., $\|\mathbb{E}[d_{2,k}-d_{2,k-1}]\| \leq \delta d^B_2$ is bounded for all $k$, one would choose $J_{2,k}$ as 0 or use \eqref{eq:sdp} with $i=2$, depending on the lower of the two bounds, $d^B_2$ or $\delta d^B_2$.
 
 In addition, $J_{1,k}$ and $J_{2,k}$ must also be chosen so that $u$, $\hat{d}_{1,k}$ and $\hat{d}_{2,k-1}$ can be uniquely determined since $\hat{d}_{1,k}$ and $\hat{d}_{2,k-1}$ become implicit equations. Thus, the choices of $J_{1,k}$ and $J_{2,k}$ must also be such that $\tilde{J}_k$ is invertible. The explicit expressions for $\hat{d}_{1,k}$ and $\hat{d}_{2,k-1}$ (to be substituted into Algorithm \ref{algorithm1}) is then 
\begin{align} \label{eq:dexp}
\begin{bmatrix} \hat{d}_{1,k}\\ \hat{d}_{2,k-1} \end{bmatrix} = \tilde{J}_k^{-1} \begin{bmatrix} M_{1,k} (z_{1,k}-C_{1,k} \hat{x}_{k|k}) \\ M_{2,k} (z_{2,k} - C_{2,k} \hat{x}_{k|k-1}) \end{bmatrix}.
\end{align}

Substituting \eqref{eq:dexp} back into \eqref{eq:DTcontrol}, we obtain the feedback controller in \eqref{eq:controlexp}.
\end{proof}

Note that if the system in \eqref{eq:system} for each $q_k \in \mathcal{Q}$ fulfills a \emph{matching condition} for $d_{1,k}$ \footnote{The matching condition assumption is common for disturbance rejection in the sliding mode and adaptive control literature.}, i.e., $\exists J_{1,k}$ such that $B_k J_{1,k} d_{1,k}=G_{1,k} d_{1,k}$, the above minimization procedure will exactly cancel out the attack signal $d_{1,k}$. 

\section{Simulation Examples} \label{sec:examples}

\subsection{Benchmark System (Signal Magnitude \& Location Attacks)}\label{eg2}

\begin{figure}[!tp]
\begin{center} 
\includegraphics[scale=0.575,trim=1mm 55mm 0mm 0mm,clip]{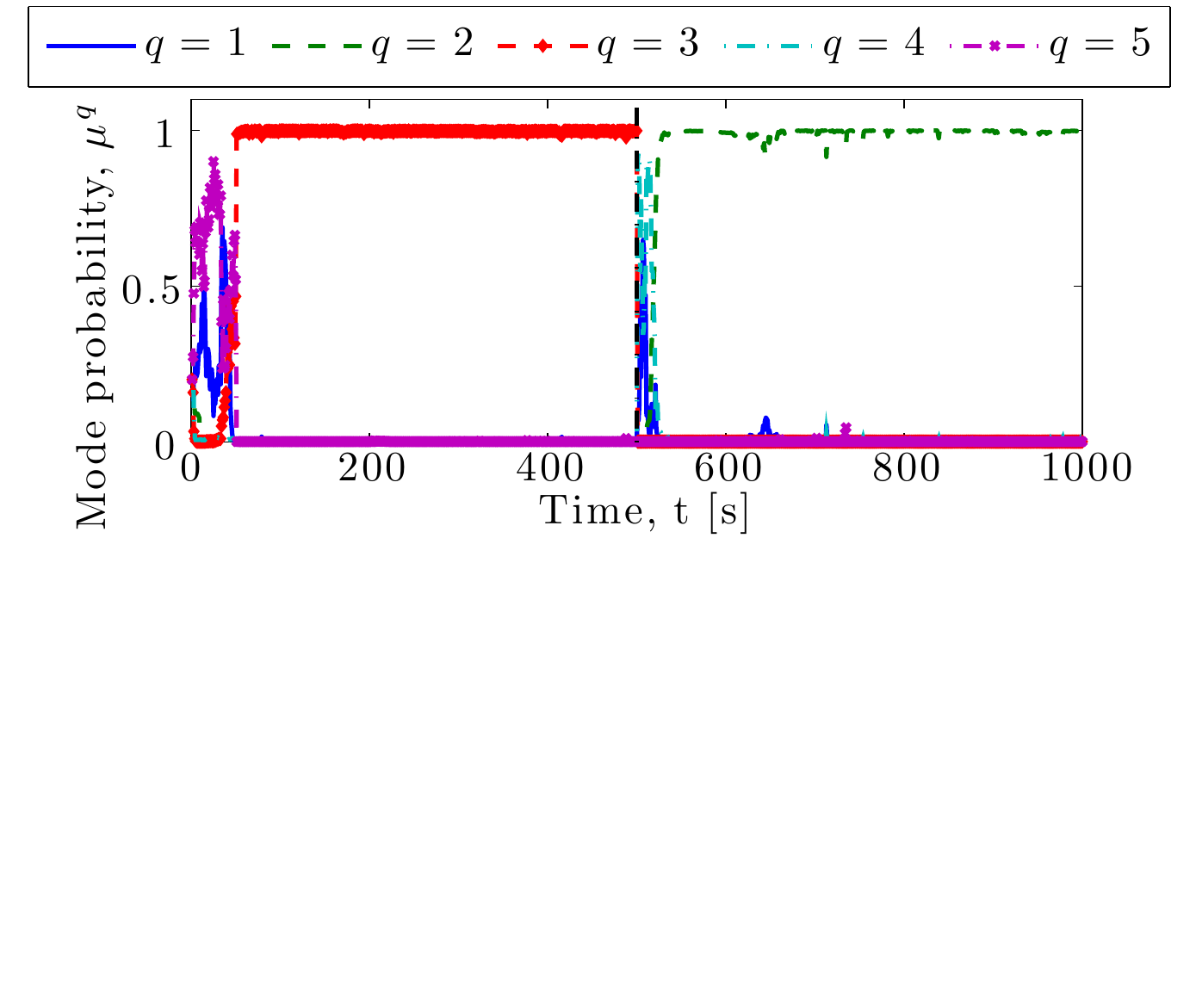}
\caption{Mode probabilities for Example \ref{eg2}.\label{fig:modeA}}
\includegraphics[scale=0.6,trim=23mm 6mm 15mm 8mm,clip]{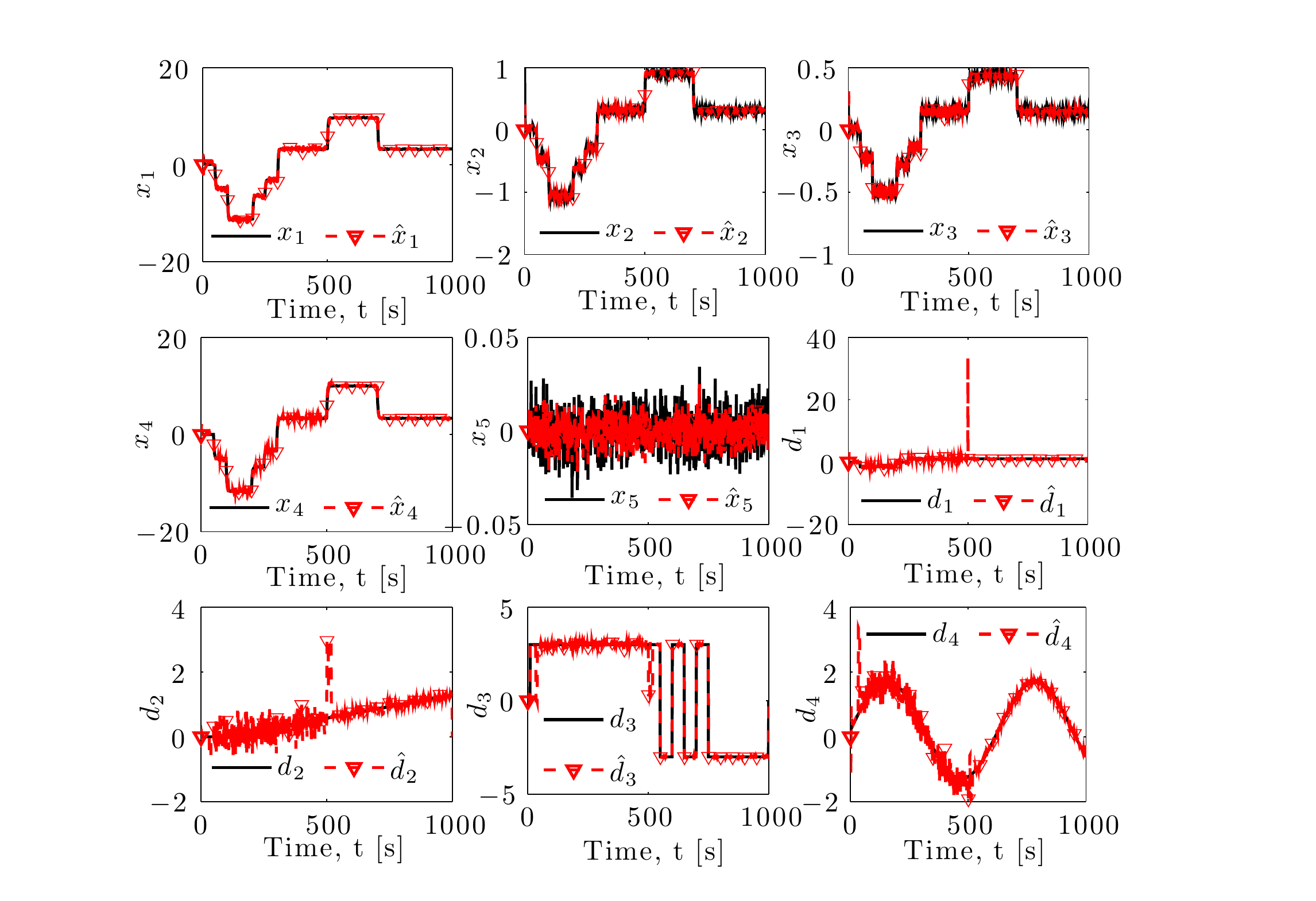}
\caption{State and attack magnitude estimates in Example \ref{eg2}.\label{fig:statesA}} \vspace{-0.3cm}
\end{center}
\end{figure}

In this example, we consider the resilient state estimation problem for a system (modified from \cite{Yong.Zhu.ea.Automatica16}) that has been used as a benchmark for many state and input filters, with only one mode of operation ($t_m=1$) and with possible attacks on the actuator and 4 of the 5 sensors ($t_a=1$, $t_s=4$): 
\begin{align*}
A \hspace{-0.05cm}&=\hspace{-0.15cm} \begin{bmatrix} 0.5 & 2 & 0 & 0 & 0\\ 0 & 0.2 & 1 & 0 &1 \\ 0 & 0 & 0.3 & 0 & 1 \\ 0 & 0 & 0 & 0.7 & 1 \\ 0 & 0 & 0 & 0 & 0.1\end{bmatrix}\hspace{-0.1cm}; \quad  
  B\hspace{-0.05cm}=\hspace{-0.05cm}G\hspace{-0.05cm}=\hspace{-0.05cm} \begin{bmatrix} 1 \\ 0.1 \\ 0.1  \\1 \\0 \end{bmatrix}\hspace{-0.1cm}; \quad
C \hspace{-0.05cm}=\hspace{-0.05cm} \begin{bmatrix}1 & 0 & 0& 0 &0\\0 & 1& -0.1& 0& 0 \\ 0 & 0 & 1& -0.5 & 0.2\\ 0 & 0 & 0 & 1 & 0\\ 0 & 0.25 & 0 & 0 & 1\end{bmatrix}\hspace{-0.1cm}; \\ 
H\hspace{-0.05cm}&=\hspace{-0.15cm}\begin{bmatrix} 1 & 0 & 0 & 0 \\ 0 & 1 & 0 & 0 \\ 0 & 0 & 1 & 0 \\ 0 & 0 & 0 & 1 \\ 0 & 0 & 0 & 0 \end{bmatrix}\hspace{-0.1cm}; \quad Q \hspace{-0.05cm}=\hspace{-0.05cm} 10^{-4}\hspace{-0.1cm} \begin{bmatrix} 1 & 0 & 0 & 0 & 0  \\ 0 & 1 & 0.5& 0 & 0\\ 0& 0.5 & 1 & 0 & 0 \\ 0 & 0 & 0 & 1 & 0 \\ 0 & 0 & 0 & 0 & 1 \end{bmatrix}\hspace{-0.1cm}; \quad 
R \hspace{-0.05cm}=\hspace{-0.05cm} 10^{-4}\hspace{-0.1cm}\begin{bmatrix}  1 & 0 & 0 & 0.5 & 0\\0 & 1 & 0 & 0 & 0.3\\0 & 0 & 1 & 0 & 0\\ 0.5 & 0 & 0 & 1 & 0 \\ 0 & 0.3 & 0 & 0 & 1\end{bmatrix}\hspace{-0.1cm}.
\end{align*} 
\indent The known input $u_k$ is $2$ for $100 \leq k \leq 300$, $-2$ for $500 \leq k \leq 700$ and $0$ otherwise,
whereas the unknown inputs are as depicted in Figure \ref{fig:statesA}.
We also assume that there are at most $p=4$ attacks with no constraints on $n_a$ and $n_s$; as a result, we have to consider $\mathfrak{N}=1 \cdot \binom{5}{4}=5$ models.

Due to space limitation, we only provide simulation results for the case when the signal attack locations are switched from $q=3$ (attack on actuator and sensors 1,3,4)  to $q=2$ (attack on actuator and sensors 1,2,4) at time $t=500s$. From Figure \ref{fig:modeA}, we observe that the mode probabilities converge to their true values. Figure \ref{fig:statesA} shows the estimates of states as well as the unknown attack signal magnitudes. The state estimates, which are our main concern, are seen to be good even before the mode probabilities converge, while the unknown attack signals are also reasonably well estimated, with the exception of little jumps in its estimates during the switch in attack locations at $t=500s$. Similar results (not shown) are obtained for all other attack modes, $q=1$ (attack on actuator and sensors 1,2,3), $q=4$ (attack on actuator and sensors 2,3,4) and $q=5$ (attack on sensors 1,2,3,4).

\yong{
\subsection{IEEE 68-Bus Test System (Mode \& Signal Magnitude Attacks)}\label{eg1}

\begin{figure}[!bp]
\begin{center}
\includegraphics[scale=0.395,trim=1mm 1mm 1mm 0mm,clip]{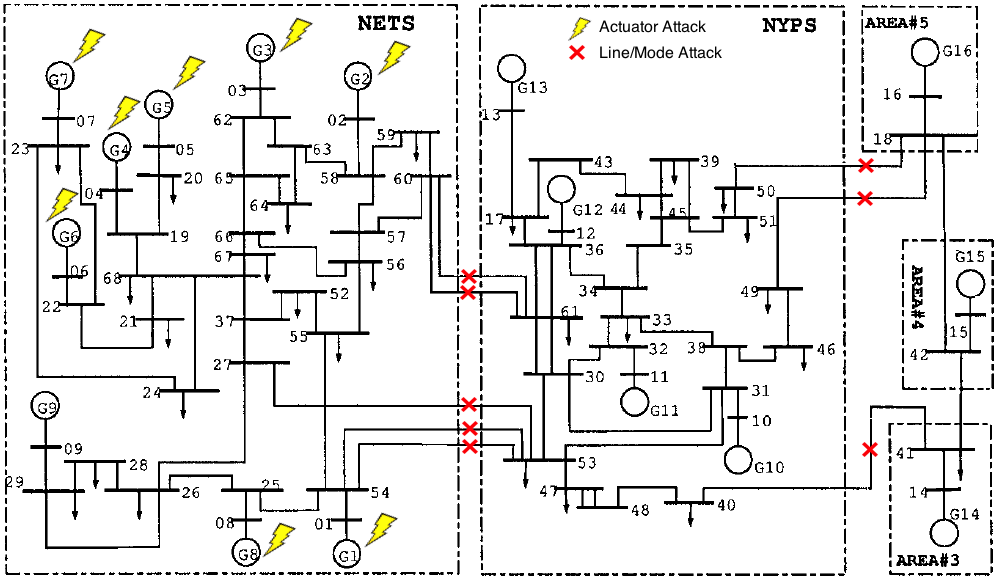}
\caption{\yong{IEEE 68-bus test system with locations of potential actuator signal and mode/transmission line attacks (adapted from \protect\cite{pal2006robust})}.\label{fig:IEEE68bus}}  \vspace{-0.3cm}
\end{center}
\end{figure}

Next, we apply our approach to the IEEE 68-bus test system shown in Figure \ref{fig:IEEE68bus} 
to empirically illustrate that the proposed algorithm can scale to large systems. 

A power network is generally represented by undirected graph
$(\mathcal{V}, \mathcal{E})$ with the set of buses $\mathcal{V}\triangleq\{1, · · · , N\}$ and the set
of transmission/tie lines $\mathcal{E} \subseteq \mathcal{V} \times \mathcal{V}$.  Each
bus is either a generator bus $i \in \mathcal{G}$, or a load bus $i \in \mathcal{L}$. The set of neighboring
buses of $i \in \mathcal{V}$ is denoted as $\mathcal{S}_i \triangleq \{j \in \mathcal{V} \setminus \{i\}|(i,j) \in \mathcal{E}\}$.
For the IEEE 68-bus test system, there are 16 generator buses and 52
load buses (i.e., $|\mathcal{G}|=16$, $|\mathcal{L}|=52$ and $|\mathcal{V}|=68$). Each bus, $i \in \mathcal{V}$, is described by (as in \cite[Chap. 10]{Wood.2013}):
\begin{align}\label{eq:busDyn}
\begin{array}{rl}
\dot{\theta}_i(t)&=\omega_i(t),\\
\dot{\omega}_i(t)&=-\frac{1}{m_i}[D_i \omega_i(t) +\sum_{j \in \mathcal{S}_i} P^{ij}_{tie} (t)-(P_{M_i}(t)+d_{a,i}(t))+P_{L_i}(t) +w_i (t)],
\end{array}
\end{align}
with the phase angle $\theta_i(t)$ and angular frequency  $\omega_i(t)$ as
system states (thus, the state dimension is $n=136$) and an actuator attack signal $d_{a,i}(t)$. The power flow between neighboring buses $(i,j) \in \mathcal{E}$ is given by $P^{ij}_{tie}(t)=-P^{ji}_{tie}(t)=t_{ij} (\theta_i(t)-\theta_j (t))$, while the mechanical power and power demand are denoted as $P_{M_i} (t)$ and $P_{L_i} (t)$, respectively.
The mechanical power $P_{M_i} (t)$ is the control input for the generator bus $i \in \mathcal{G}$
and is zero at load bus $i \in \mathcal{L}$. On the other hand, since power demand $P_{L_i} (t)$
 can be obtained using load forecasting methods (e.g., \cite{alfares2002electric}), it is assumed to be a known input to the system. It is assumed that the noise $w_i(t)$ is a zero-mean Gaussian signal with covariance matrix $Q_i(t) = 0.01$ and the system parameters
are adopted from \cite[page 598]{kundur1994power}: $D_i = 1$, 
$t_{ij} = 1.5$ for all $i \in \mathcal{V}, j \in \mathcal{S}_i$ and $t_{ij}=0$ otherwise. Angular momentums are $m_i = 10$
for $i \in \mathcal{G}$ and a larger value $m_i = 100$ for load buses $i \in \mathcal{L}$.


The measurements are sampled at discrete times (with sampling time $\Delta t= 0.01$s):
\begin{align}
y_{i,k} = \begin{bmatrix}P_{elec,i,k}& \theta_{i,k}& \omega_{i,k}\end{bmatrix}^\top + v_{i,k}, 
\end{align}
where $P_{elec,i,k}=D_i \omega_{i,k} + P_{L_i,k}$ is the electrical power output and  $v_{i,k}$ is a zero-mean Gaussian noise signal with covariance matrix $R_i(t) = 0.01^4 I_{3}$. The continuous system dynamics \eqref{eq:busDyn} is also discretized with a sampling time of $\Delta t= 0.01$s so that it is compatible with the measurement model. Moreover, in this example, we choose stabilizing control inputs $P_{M_i,k}$ and $P_{L_i,k}$  to regulate the phase angles to $\theta_i=10$ rad with system eigenvalues at $-0.05$ using standard linear control design tools, which is combined with the attack-mitigating controller described in Theorem \ref{thm:control}.

The attacker could launch actuator attacks and mode/transmission line attacks as shown in Figure \ref{fig:IEEE68bus}. For this case study, we consider 8 potential attacks modes ($|\mathcal{Q}|=8$):
\begin{description}
\item[Mode $q=1$] Lines \{27,53\},\{53,54\},\{60,61\} \& actuator $G1$.
\item[Mode $q=2$] Lines \{18,49\},\{18,50\} \& actuator $G2$.
\item[Mode $q=3$] Line \{40,41\} \& actuator $G3$.
\item[Mode $q=4$] Lines \{18,49\},\{18,50\},\{27,53\},\{53,54\},\{60,61\} \& actuator $G4$.
\item[Mode $q=5$] Lines \{27,53\},\{40,41\},\{53,54\},\{60,61\} \& actuator $G5$.
\item[Mode $q=6$] Lines \{18,49\},\{18,50\},\{40,41\} \& actuator $G6$.
\item[Mode $q=7$] Lines \{18,49\},\{18,50\},\{27,53\},\{40,41\},\{53,54\},\{60,61\} \& actuator $G7$.
\item[Mode $q=8$] Actuator $G8$.
\end{description}

We consider a time-varying attack scenario where the attack mode is $q=2$ for $t = [0, 2.5)$s followed by  $q=5$ for $t = [2.5, 5)$s, while the actuator attack signal is given by $d_{a,i}=10^3 t$ for $t = [0, 1.25)$s, $d_{a,i}=10^3 (2.5-t)$ for $t = [1.25, 2.5)$s and $d_{a,i}=-500 (t-2.5)$ for $t = [2.5, 5)$s.
The goal of this case study is to demonstrate that our proposed approach can detect, identify and mitigate attacks. First, Figure \ref{fig:modeB} shows that the attacks are almost instantaneously detected and  the attack modes are quickly identified. In addition, Figure \ref{fig:attackB} shows that the actuator attack signal is successfully identified and similarly, all system states can be well estimated (not depicted for brevity). Finally, the attack mitigation scheme is successful at keeping the phase angles regulated to 10 rad/s despite attacks, whereas in the absence of attack mitigation, the phase angles can be significantly influenced by the attackers, as shown in Figure \ref{fig:statesB}.

\begin{figure}[!tp]
\begin{center} 
\begin{subfigure}[t]{0.49\textwidth}
        \centering
	\includegraphics[scale=0.575,trim=1mm 1mm 0mm 2mm,clip]{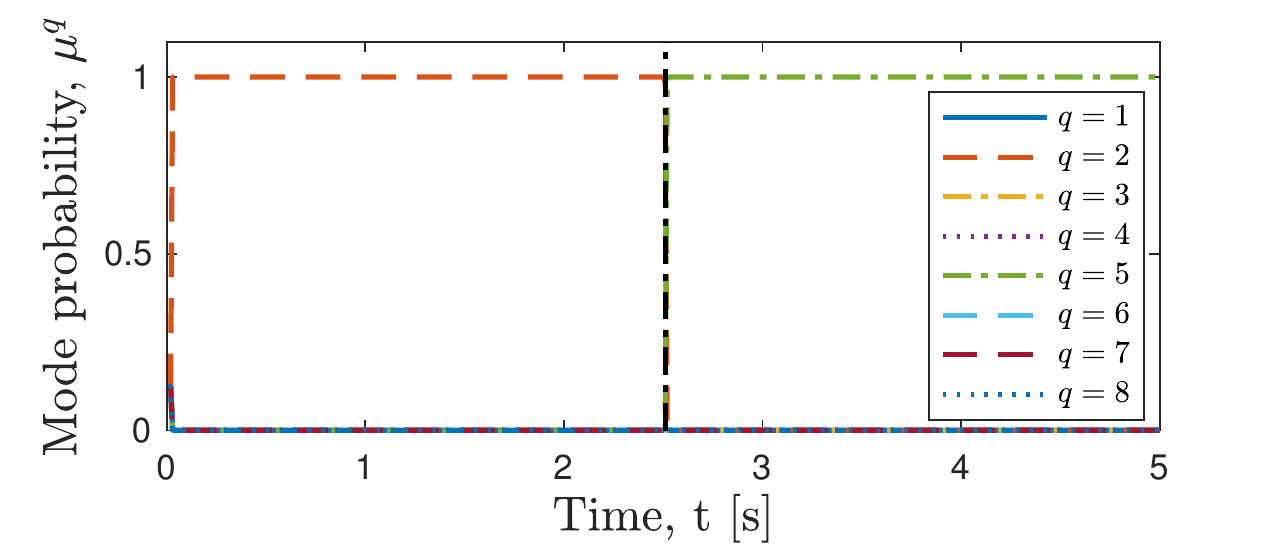}
	\caption{Posterior mode probabilities.\label{fig:modeB}}
\end{subfigure}
\begin{subfigure}[t]{0.49\textwidth}
	\centering
	\includegraphics[scale=0.575,trim=0mm 1mm 0mm 2mm,clip]{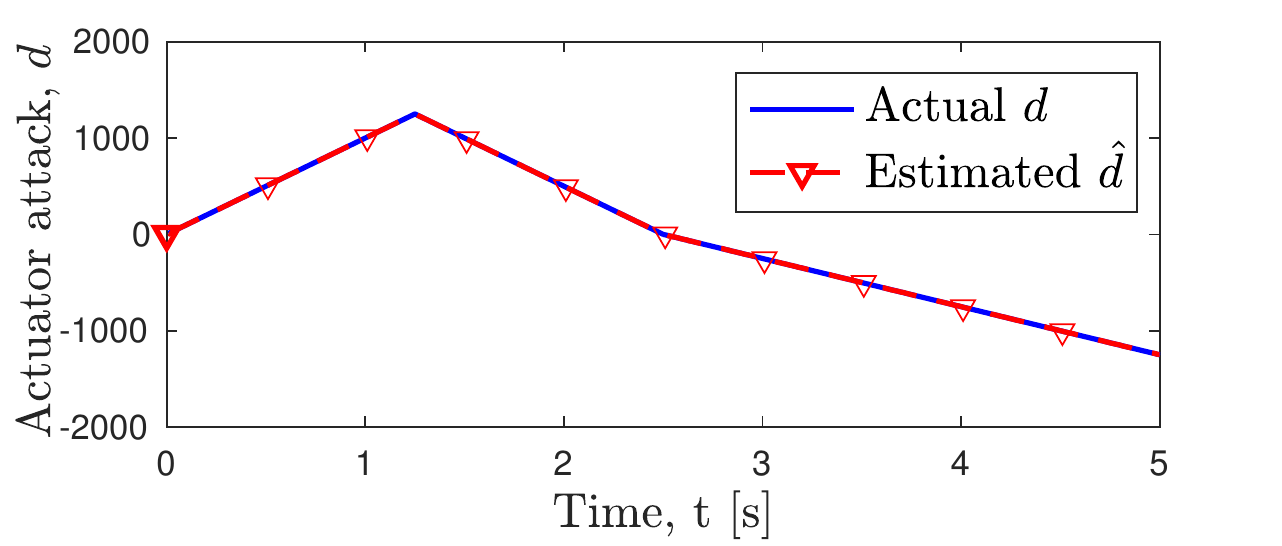}
	\caption{Actuator attack signal estimates.\label{fig:attackB}} 
\end{subfigure}
\vspace{-0.3cm}
\end{center}
\caption{Estimates of attack signal and mode probabilities when the attack mode switches from $q=2$ to $q=5$ at 2.5 s in Example \ref{eg1}.}
\end{figure}

\begin{figure}[!tp]
\begin{center} 
\includegraphics[scale=0.31,trim=35mm 5mm 0mm 0mm,clip]{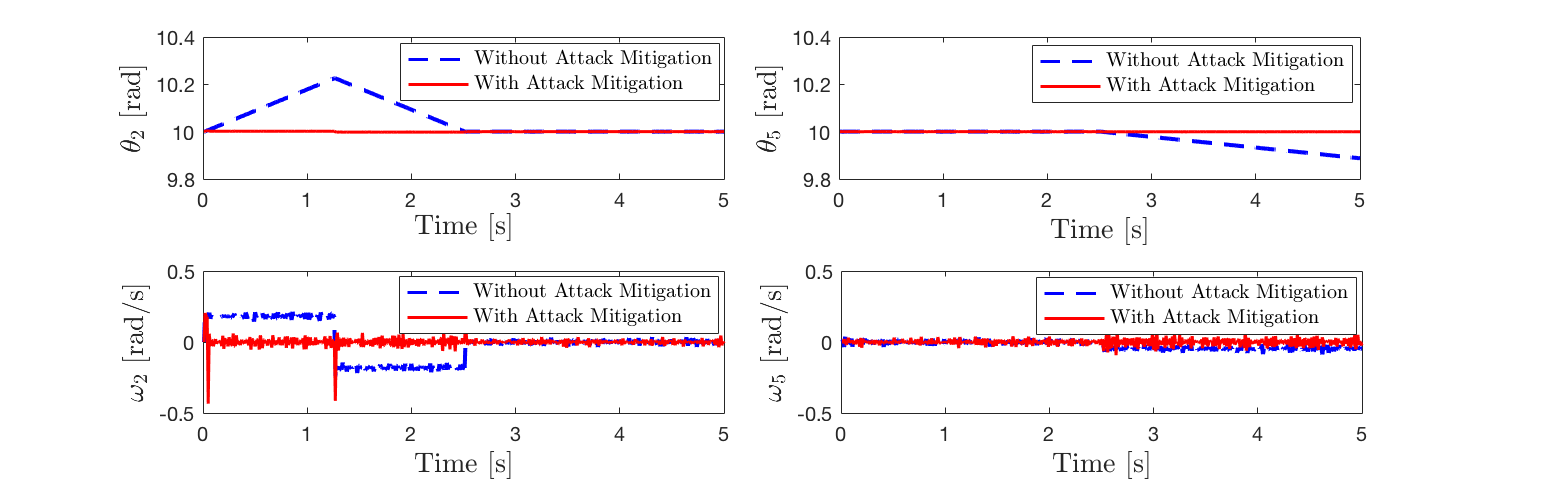}
\caption{A comparison of system states with and without the proposed attack mitigation.\label{fig:statesB}} \vspace{-0.3cm}
\end{center}
\end{figure}
}

\section{Conclusion} \label{sec:conclusion}

We addressed the problem of resilient state estimation for switching (mode/topology) attacks and  attacks on actuator and sensor signals of stochastic cyber-physical systems, which is especially important given the proliferation of the internet of things. We first modeled the problem as a hidden mode switched linear stochastic system with unknown inputs and showed that the multiple-model inference algorithm  in \cite{Yong.Zhu.ea.SICON16} is a suitable solution to these issues. Moreover, we provided an achievable upper bound on the maximum number of asymptotically correctable signal attacks and also the maximum number of required models for the multiple-model approach.  We also found sufficient conditions for attack (un-)detectability and attack identification, as well as designed an attack-mitigating feedback controller. %
Simulation examples, \yong{including one with the IEEE 68-bus test system,} demonstrated the effectiveness of our approach for resilient estimation and attack identification and mitigation.
\vspace{-0.05cm} 
\bibliographystyle{ACM-Reference-Format-Journals}
\bibliography{biblio}
                             
\appendix
\yong{\section*{APPENDIX}
\setcounter{section}{1}
\subsection{System Transformation} \label{sec:appendix}
To obtain the mode-matched input and state estimator \eqref{eq:d}, \eqref{eq:xstar} and \eqref{eq:stateEst}, we will consider a system transformation for the continuous system  dynamics and output equation in \eqref{eq:system} for each mode $q_k$ \cite{Yong.Zhu.ea.Automatica16}. First, we 
rewrite the direct feedthrough matrix $H_k$ using singular value decomposition as
$H_k= \begin{bmatrix}U_{1,k}& U_{2,k} \end{bmatrix} \begin{bmatrix} \Sigma_k & 0 \\ 0 & 0 \end{bmatrix} \begin{bmatrix} V_{1,k}^{\, \top} \\ V_{2,k}^{\, \top} \end{bmatrix}$, 
where $\Sigma_k \in \mathbb{R}^{p_{H_k} \times p_{H_k}}$ is a diagonal matrix of full rank, $U_{1,k} \in \mathbb{R}^{l \times p_{H_k}}$, $U_{2,k} \in \mathbb{R}^{l \times (l-p_{H_k})}$, $V_{1,k} \in \mathbb{R}^{p \times p_{H_k}}$ and $V_{2,k} \in \mathbb{R}^{p \times (p-p_{H_k})}$ with $p_{H_k}:={\rm rk} (H_k)$, while $U_k:=\begin{bmatrix} U_{1,k} & U_{2,k} \end{bmatrix}$ and $V_k:=\begin{bmatrix} V_{1,k} & V_{2,k} \end{bmatrix}$ are unitary matrices. 
When there is no direct feedthrough, $\Sigma_k$, $U_{1,k}$ and $V_{1,k}$ are empty matrices\footnote{We adopt the convention that the inverse of an empty matrix is also an empty matrix and assume that operations with empty matrices are possible.}, 
and $U_{2,k}$ and $V_{2,k}$ are arbitrary unitary matrices.

Moreover, we define two orthogonal components of the unknown input $d_k$ given by \vspace{-0.1cm}
\begin{align}
d_{1,k}=V_{1,k}^\top d_k, \quad
d_{2,k}=V_{2,k}^\top d_k.
\end{align}
Since $V_k$ is unitary, $d_k =V_{1,k} d_{1,k}+V_{2,k} d_{2,k}$. Thus, the continuous system  dynamics and output equation in \eqref{eq:system} for each mode $q_k$ can be rewritten as
\begin{align}
\nonumber x_{k+1}&=A_k x_k+B_k u_k+G_k V_{1,k} d_{1,k} +G_k V_{2,k} d_{2,k} +w_k\\
& = A_k x_k+B_k u_k+G_{1,k} d_{1,k} +G_{2,k} d_{2,k} +w_k, \ \, \label{eq:sysX}\\ 
\nonumber y_k&=C_k x_k +D_k u_k + H_k V_{1,k} d_{1,k} + H_k V_{2,k} d_{2,k}  + v_k\\
&=C_k x_k +D_k u_k + H_{1,k} d_{1,k} + v_k, \label{eq:y}
\end{align}
where $G_{1,k} :=G_k V_{1,k}$, $G_{2,k} :=G_k V_{2,k}$ and $H_{1,k} :=H_k V_{1,k}=U_{1,k} \Sigma_k$. Next, we decouple the output $y_k$ 
using a nonsingular transformation $T_k =\begin{bmatrix} T_{1,k}^\top & T_{2,k}^\top \end{bmatrix}^\top$
\begin{align} \label{eq:T_k}
\begin{array}{ll} 
T_k &= \begin{bmatrix} I_{p_{H_k}} & -U_{1,k}^\top R_k U_{2,k} (U_{2,k}^\top R_k U_{2,k})^{-1}\\ 0 & I_{(l-p_{H_k}) } \end{bmatrix} \begin{bmatrix} U_{1,k}^\top \\ U_{2,k}^\top \end{bmatrix}\end{array}\hspace{-0.3cm}
\end{align}
to obtain $z_{1,k} \in \mathbb{R}^{p_{H_k}}$ and $z_{2,k} \in \mathbb{R}^{l-p_{H_k}}$ given by
\begin{align} \label{eq:sysY} \begin{array}{lll}
z_{1,k}&=T_{1,k} y_k &= C_{1,k} x_k + D_{1,k} u_k +\Sigma_k d_{1,k} + v_{1,k}\\
z_{2,k}&=T_{2,k} y_k &= C_{2,k} x_k + D_{2,k} u_k + v_{2,k}\end{array}
\end{align}
where $C_{1,k} :=T_{1,k} C_k$, $C_{2,k} := T_{2,k} C_k = U_{2,k}^\top C_k$, $D_{1,k} :=T_{1,k} D_k$, $D_{2,k} := T_{2,k} D_k = U_{2,k}^\top D_k$, $v_{1,k} :=T_{1,k} v_k$ and $v_{2,k} := T_{2,k} v_k = U_{2,k}^\top v_k$. This system transformation essentially decouples the output equation involving $y_k$ into two components, one with a full rank direct feedthrough matrix and the other without direct feedthrough. The transformation is also chosen such that the measurement noise terms for the decoupled outputs are uncorrelated. The covariances of $v_{1,k}$ and $v_{2,k}$ are 
\begin{align} \label{eq:R12}
\nonumber R_{1,k}&:=\mathbb{E}[v_{1,k} v_{1,k}^\top]=T_{1,k} R_k T_{1,k}^\top \succ 0,\\
\nonumber R_{2,k}&:=\mathbb{E}[v_{2,k} v_{2,k}^\top]=T_{2,k} R_k T_{2,k}^\top = U_{2,k}^\top R_k U_{2,k}  \succ 0, \ \\
R_{12,k}&:=\mathbb{E}[v_{1,k} v_{2,k}^\top] = T_{1,k} R_k T_{2,k}^\top = 0, \\
 \nonumber R_{12,(k,i)}&:=\mathbb{E}[v_{1,k} v_{2,i}^\top] =T_{1,k}\mathbb{E}[v_{k} v_{i}^\top]T_{2,i}^\top=0, \ \forall k \neq i. 
 \end{align}
Moreover, $v_{1,k}$ and $v_{2,k}$ are uncorrelated with the initial state $x_0$ and process noise $w_k$.}

\end{document}